\DeclareMathOperator{\subjectto}{subject~to}
\DeclareMathOperator*{\minimize}{minimize}
\newtheorem{lemma}{Lemma}
\newtheorem{corollary}{Corollary}
\newtheorem{theorem}{Theorem}
\theoremstyle{remark}\newtheorem{remark}{Remark}
\def\b{{\mathbf b}}
\def\c{{\mathbf c}}
\def\p{{\mathbf p}}
\def\s{{\mathbf s}}
\def\w{{\mathbf w}}
\def\x{{\mathbf x}}
\def\y{{\mathbf y}}
\def\z{{\mathbf z}}
\def\A{{\mathbf A}}
\begin{document}
\title{Randomized Block Frank-Wolfe for Convergent Large-Scale Learning}

\author{
 Liang Zhang,~\IEEEmembership{Student Member,~IEEE,}	 Gang Wang,~\IEEEmembership{Student Member,~IEEE,}\\
 Daniel Romero,~\IEEEmembership{Member,~IEEE}, and 
	 Georgios B. Giannakis,~\IEEEmembership{Fellow,~IEEE}

\thanks{
	
	L. Zhang, G. Wang, and G. B. Giannakis are with the Digital Technology Center and the Department of Electrical and Computer Engineering at the University of Minnesota, Minneapolis, MN 55455, USA. D. Romero is with the Department of Information and Communication Technology, University of Agder, Grimstad 4879, Norway. G. Wang is also with the State Key Laboratory of Intelligent Control and Decision of Complex Systems, Beijing Institute of Technology, Beijing 100081, P. R. China. E-mails: \{zhan3523,\,gangwang,\,georgios\}@umn.edu, daniel.romero@uia.no.} 

}

\maketitle

\begin{abstract}
Owing to their low-complexity iterations, Frank-Wolfe (FW) solvers are well suited for various large-scale learning tasks. When block-separable constraints are present, randomized block FW (RB-FW) has been shown to further reduce complexity by  updating only a fraction of coordinate blocks per iteration. To circumvent the limitations of existing methods, the present work develops step sizes for RB-FW that enable a flexible selection of the number of blocks to update per iteration while ensuring convergence and feasibility of the iterates. To this end,
convergence rates of RB-FW are established through computational bounds on a primal sub-optimality measure and on the duality gap. 
The novel bounds extend the existing convergence analysis, which only applies to a step-size sequence that does not generally lead to feasible iterates. Furthermore, two classes of step-size sequences that guarantee feasibility of the iterates are also proposed to enhance flexibility in choosing decay rates. 
The novel convergence results are markedly broadened to encompass also nonconvex objectives, and further assert that RB-FW with exact line-search reaches a stationary point at rate $\mathcal{O}(1/\sqrt{t})$.  
Performance of RB-FW with different step sizes and number of  blocks is demonstrated in two applications, namely charging of electrical vehicles and structural support vector machines. Extensive simulated tests
demonstrate the performance improvement of RB-FW relative to existing randomized single-block FW methods.

\end{abstract}

\begin{IEEEkeywords}
Conditional gradient descent, nonconvex optimization, block coordinate, parallel optimization.
\end{IEEEkeywords}

\section{Introduction}\label{sec:intro}
The Frank-Wolfe (FW) algorithm~\cite{Frankwolfe}, also known as conditional gradient descent~\cite{cgd1970}, has well-documented merits as a  first-order solver especially for smooth constrained optimization tasks over convex compact sets. FW has recently received  revived  interest due to its simplicity and versatility in handling  structured constraint sets in various signal processing and machine learning applications~\cite{jaggi2013revisiting}. This growing popularity is due to its per-iteration simplicity that only entails minimizing a linear function over the feasible set, whereas competing first-order alternatives, such as projected gradient descent~\cite{Be99} and their accelerated versions~\cite{Nesterov}, involve minimizing a quadratic function over the feasible set per iteration. Typically, solving a constrained linear optimization is considerably easier than finding the aforementioned projections per iteration. 
The resulting savings benefit diverse large-scale learning tasks, 
including matrix completion~\cite{jaggi2010simple}, multi-class classification~\cite{harchaoui2015conditional}, image reconstruction~\cite{harchaoui2015conditional},  structural support vector machines (SVMs)~\cite{lacoste2013block}, particle filtering~\cite{lacoste2015sequential}, sparse phase retrieval \cite{sparta,taf}, and scheduling electric vehicle (EV) charging~\cite{liang16scalable}.





Despite its simplicity, FW can become prohibitively expensive when dealing with high-dimensional data. For this reason, \emph{randomized} single-block FW has been advocated for solving large-scale convex constrained programs~\cite{lacoste2013block}, where only a randomly selected block of variables is updated per iteration. At the price of obtaining the duality gap, convergence of randomized single-block FW has been improved in~\cite{osokin2016minding}. Furthermore, randomized multiple-block FW was devised to reduce convergence time by updating multiple blocks per iteration in \emph{parallel}~\cite{stofw}. Unfortunately, feasibility of the resulting iterates is in general not guaranteed by the original parallel randomized block (RB)-FW~\cite{stofw}. Moreover, all results on randomized FW focus on convex objectives, and convergence of RB-FW for nonconvex programs remained hitherto an~open problem.
 

The present paper is the first to introduce a broad class of step sizes for RB-FW that offer: (i) guaranteed convergence and feasibility of the iterates along with (ii) flexibility to select a step-size sequence whose decay rate is attuned to the problem at hand. RB-FW with this rich class of step sizes subsumes the classical FW as well as the randomized single-block FW solvers as special cases. We further broaden the scope of RB-FW by allowing for nonconvex smooth objective functions. 
Specifically, we establish that RB-FW with typical step sizes attains a stationary point at rate $\mathcal{O}(1/\log t)$, whereas line-search-based step sizes enjoys an improved rate of $\mathcal{O}({1}/{\sqrt{t}})$. Remarkably, the latter coincides with the rate afforded by  classical FW for nonconvex problems~\cite{lacoste2016convergence}. Finally, simulated tests on optimal coordination of EV charging and structural SVMs corroborate the merits of RB-FW with our novel step sizes relative to  single-block FW.

The remainder of this paper is organized as follows. Section~\ref{sec:prefw} outlines the FW and RB-FW algorithms. Section~\ref{sec:bcfw} describes two novel families of step sizes for RB-FW, and establishes their feasibility and convergence. Section~\ref{sec:bcfwnoncvx} derives the RB-FW convergence rates for non-convex programs, whereas 
Section~\ref{sec:fw} highlights the implications of the results in Section~\ref{sec:bcfw} for classical FW. Section~\ref{sec:app} shows the merits of RB-FW in two application settings, whereas Section~\ref{sec:tests} tests the RB-FW performance numerically. Finally, Section~\ref{sec:conclusions} concludes~the~paper.

Regarding common notation, lower- (upper-) case boldface letters represent column vectors (matrices). Sets are denoted by calligraphic letters,  $|\mathcal{B}|$ stands for the cardinality of set $\mathcal{B}$, and
$\mathcal{N}\setminus \mathcal{B} := \{x\in \mathcal{N}: x \notin \mathcal{B}\}$   denotes set difference. 
 Symbol $^{\top}$  is reserved for transposition of vectors and matrices, whereas $\mathbf{0}$ and $\mathbf{1}$ denote the all-zero and all-one vectors of suitable dimensions, respectively. Operator $\lceil x \rceil$ gives the smallest integer greater than or equal to $x$, and $\log(x)$ returns the natural logarithm of $x$. 

\color{black}
\section{Preliminaries}\label{sec:prefw}
The classical FW algorithm~\cite{Frankwolfe} aims at solving  the following generic constrained optimization problem
\begin{align}\label{eq:problem}
\minimize_{\mathbf{x}\in\mathbb{R}^d}\quad &~~ f(\mathbf{x})\\
\subjectto \quad & ~~ \mathbf{x}\in \mathcal{X}\nonumber
\end{align}
where  $f(\x)$ is  convex and differentiable, while the feasible set $\mathcal{X}$ is convex and compact. A  number of problems in signal processing and machine learning, e.g., ridge regression or basis pursuit~\cite{chen2001atomic}, can be expressed in this form.
Listed as Algorithm~\ref{alg:fw}, FW is initialized with a feasible $\x^0$. Given iterate $\mathbf{x}^t$, it then solves the following so-termed ``linear oracle''
\begin{equation}\label{eq:lo}
\mathbf{s}^t:= \arg\min_{\mathbf{s} \in \mathcal{X}} ~ \mathbf{s}^{\top} \nabla f(\mathbf{x}^t)
\end{equation}	
and uses a convex combination of $\mathbf{s}^t$ with $\mathbf{x}^t$ to obtain 	
\begin{equation}
\x^{t+1} = (1-\gamma_t)\x^t+\gamma_t \s^t
\end{equation}
where the step size $\gamma_t\in (0,1]$ is typically selected as~\cite{jaggi2013revisiting} 
\begin{equation}\label{eq:typicalr}
\gamma_t = \frac{2}{t+2} \;.
\end{equation}
Alternatively, $\gamma_t$ can be chosen via line search, which picks $\x^{t+1}$ as
the best point on the line segment between $\x^t$ and $\s^t$:
\begin{equation} \label{eq:linesearch}
\gamma_t  = \arg \min_{0\leq \gamma \leq 1} f\left((1-\gamma)\mathbf{x}^t+\gamma\mathbf{s}^t\right).
\end{equation}
In either case,  Algorithm~\ref{alg:fw} converges at rate $\mathcal{O}(1/t)$~\cite{jaggi2013revisiting}. 

\begin{algorithm}[t]
\caption{Frank-Wolfe \cite{Frankwolfe}}\label{alg:fw} 
\begin{algorithmic}[1]
\renewcommand{\algorithmicrequire}{\textbf{Input:}}
\renewcommand{\algorithmicensure}{\textbf{Output:}}
\State Initialize  $t=0$,  $\mathbf{x}^0\in \mathcal{X}$
\While {stopping criterion not met}
\State Compute 
$\mathbf{s}^t= \arg\min_{\mathbf{s} \in \mathcal{X}} ~ \mathbf{s}^{\top} \nabla f(\mathbf{x}^t)$
\State Update $\mathbf{x}^{t+1}=(1-\gamma_t)\mathbf{x}^t+\gamma_t\mathbf{s}^t$
\State $t \leftarrow t+1$
\EndWhile
\end{algorithmic}
\end{algorithm}

When $d$ is large, updating all $d$ entries of $\x$ at each $t$ is computationally challenging. Randomized FW alleviates this difficulty by updating only a subset of the $d$ entries~\cite{lacoste2013block},~\cite{stofw}. Splitting $\mathbf{x}$ into $N_b$ blocks $\{\mathbf{x}_n\}_{n=1}^{N_b}$ with respective feasible sets $\{\mathcal{X}_n\}_{n=1}^{N_b}$ assumed convex and compact, 
\eqref{eq:problem} becomes
\begin{align}\label{eq:blockproblem}
	\minimize_{\mathbf{x}\in\mathbb{R}^d} \quad & f(\mathbf{x})\\
	\subjectto \quad & \mathbf{x}_1\in \mathcal{X}_1,\, \ldots,\, \mathbf{x}_{N_b} \in \mathcal{X}_{N_b}\nonumber
\end{align}
where $\x^{\top} := [\x_1^\top, \x_2^\top, \cdots, \x_{N_b}^\top]$. Note that if $N_b = 1$, then \eqref{eq:blockproblem} boils down to \eqref{eq:problem}.

The decomposition $\mathcal{X}=\mathcal{X}_1 \times \ldots \times \mathcal{X}_{N_b}$ entails no loss of generality since any $\mathcal{X}$ can be expressed in this form by setting $N_b=1$. It also emerges naturally in a number of applications, including the dual problem of structural SVMs~\cite{lacoste2013block}, trace-norm regularized tensor completion~\cite{liu2013tensor},  EV charging ~\cite{liang16scalable}, the dual problem of group fused Lasso~\cite{alaiz2013group}, and structured sub-modular minimization~\cite{jegelka2013reflection}. Thanks to the separable structure of $\mathcal{X}$,
the linear oracle in~\eqref{eq:lo} decouples across  $N_b$ blocks as  
\begin{equation}\label{eq:distlinear}
	\s_n^t = \arg\min_{\mathbf{s}_n\in \mathcal{X}_n}~\langle \s_n, \nabla_{\x_n} f(\x^t) \rangle, \quad n = 1, 2, \ldots, N_b 
\end{equation}
where $\nabla_{\x_n} f(\x^t)$ comprises the partial derivatives of $f(\x)$ with respect to the entries of  $\x_n$. 

Instead of solving the $N_b$ problems in \eqref{eq:distlinear}, RB-FW reduces complexity by solving just $B$ of them, where $B \in \{1, \ldots, N_b\}$ is a pre-selected constant.
Let $\mathcal{N}_b :=\{1, \ldots, N_b\}$ be the index set of all blocks,   and let  $\mathcal{B}_t$ be chosen at iteration $t$ uniformly at random among all subsets of $\mathcal{N}_b$ with $B$ elements. The RB-FW solver of~\eqref{eq:blockproblem} is summarized as Algorithm~\ref{alg:bcfw}.  To save computation time,  step 4 of Algorithm~\ref{alg:bcfw} can be run in parallel~\cite{stofw} as illustrated in~Fig.~\ref{fig:bcfw}. In this case, $B$ can be selected according to the number of physical processor cores in the control center. 

\textcolor{black}{The only step-size sequence for RB-FW available in the literature is}~\cite{stofw} 
\begin{equation}\label{eq:badstep}
\gamma_t = \frac{2\alpha}{\alpha^2t+2/N_b}, \quad  t =0, 1,\ldots
\end{equation}
where $\alpha:=B/N_b$ is the fraction of updated blocks.
For $\alpha =1$ and $N_b \neq 1$, note that~\eqref{eq:badstep} is different from~\eqref{eq:typicalr}; hence, FW is not generally a special case of the parallel RB-FW in~\cite{stofw}. Interestingly,  Sec.~\ref{sec:bcfw} will introduce a family of step sizes for RB-FW that \textcolor{black}{subsumes the one in~\eqref{eq:typicalr} as a special case}. 

\begin{algorithm}[t]
	\caption{Randomized Block Frank-Wolfe}\label{alg:bcfw}
	\begin{algorithmic}[1]
		\renewcommand{\algorithmicrequire}{\textbf{Input:}}
		\renewcommand{\algorithmicensure}{\textbf{Output:}}
		\State Initialize $t=0$, $\mathbf{x}^0\in \mathcal{X}$
		\While{stopping criterion not met} 
		\State Randomly pick $\mathcal{B}_t\subseteq \mathcal{N}_b$ such that $|\mathcal{B}_t|=B$
		\State \textcolor{black}{Compute $\mathbf{s}_n^t= 
			\underset{\mathbf{s}_n \in \mathcal{X}_n}{\arg\min} \quad \mathbf{s}_n^{\top} \nabla_{\x_n} f(\x^t),~\forall n \in \mathcal{B}_t$}
		\State Update 
		\begin{equation*}
			\quad 	\quad 	\quad	\quad	\mathbf{x}_n^{t+1}=\left\{
			\begin{array}{ll}
				(1-\gamma_t)\mathbf{x}_n^t+\gamma_t\mathbf{s}_n^t,& ~\forall n \in \mathcal{B}_t\\
				\mathbf{x}_n^t ,&~\forall n \in \mathcal{N}_b \setminus \mathcal{B}_t
			\end{array} \right.
		\end{equation*}
		\State $t \leftarrow t+1$
		\EndWhile
	\end{algorithmic}
\end{algorithm}

\begin{figure}[t]
	\centering
	\includegraphics[width=0.5\textwidth]{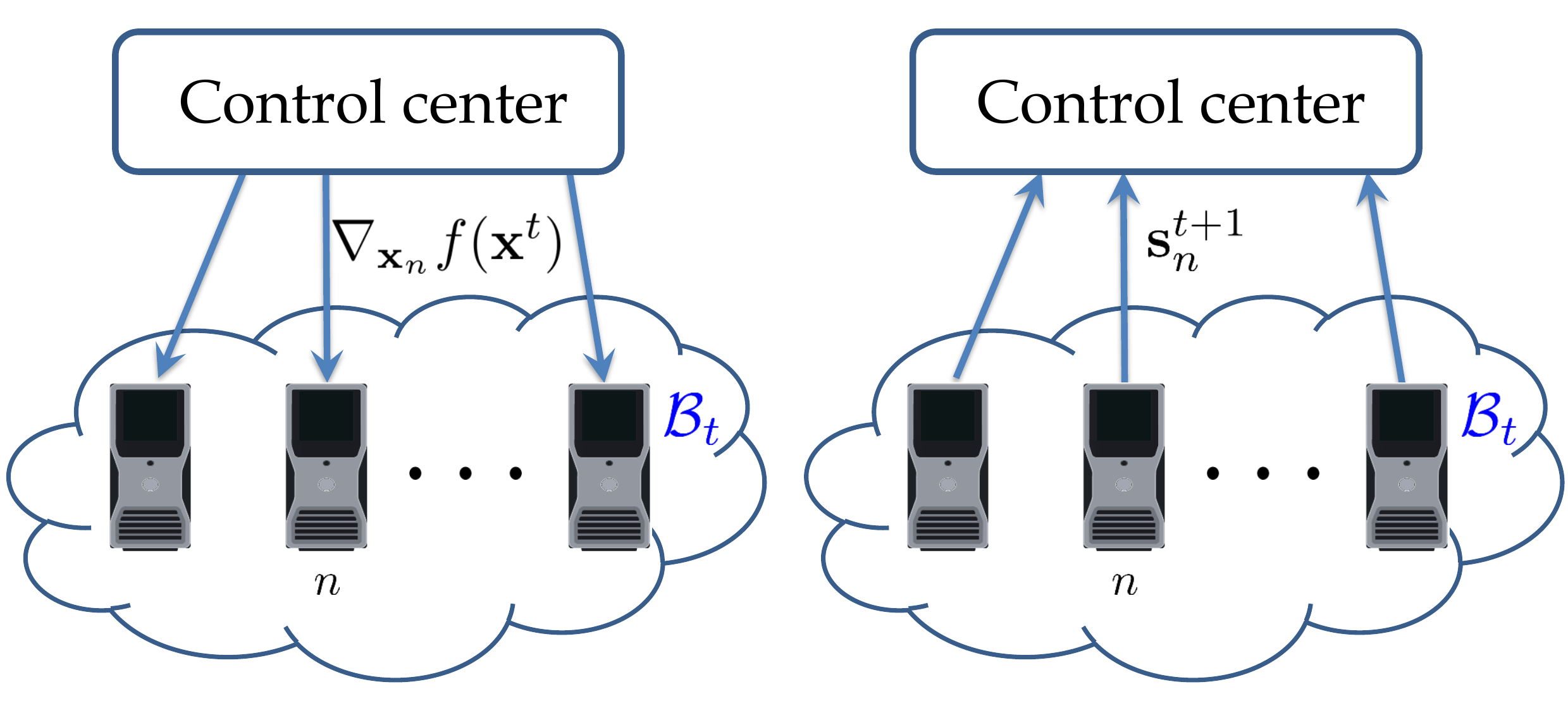} 
	\caption{Parallel implementation for Algorithm~\ref{alg:bcfw} at iteration $t\geq0$. \emph{Left:} The control center sends  gradient $\nabla_{\x_n} f(\x^t)$ to processor $n \in \mathcal{B}_t$. \emph{Right:} The updated $\{\s_n^{t+1}\}_{n\in \mathcal{B}_t}$ are sent to the control center.}\label{fig:bcfw}
\end{figure}


Regarding convergence of FW solvers, two quantities play instrumental roles. The first one is the curvature constant, which  for a differentiable $f(\x)$ over $\mathcal{X}$ is defined as~\cite{clarkson2010coresets},~\cite{jaggi2013revisiting}
\begin{equation}\label{eq:cf}
C_f:=\sup_{{\gamma\in[0, 1]   \atop \x, \s \in \mathcal{X}  }\atop \y := (1-\gamma)\x + \gamma \s } \frac{2}{\gamma^2} \left[f(\y) - f(\x)- \langle \y -\x, \nabla f(\x) \rangle \right].
\end{equation}
$C_f$ is the least upper bound of a scaled difference between $f(\y)$ and its linear approximation around $\x$. Throughout,  $C_f$ is assumed bounded. This property is closely related to the $L$-Lipschitz continuity of  $\nabla f(\x)$  over $\mathcal{X}$, which is defined~as
\begin{equation}\label{eq:lc_ass}
\exists L>0:~~ \|\nabla f(\x)-\nabla f(\s) \| \leq L\|\x-\s \|,~~ \forall \x,~\s \in \mathcal{X}. 
 \end{equation}
If~\eqref{eq:lc_ass} holds, it is easy to check that~\cite[Appendix D]{jaggi2013revisiting} 
\begin{equation}\label{eq:c_f}
C_f \leq LD_{\mathcal{X}}^2
\end{equation}
where $D_{\mathcal{X}}: = \sup_{\x,\s \in \mathcal{X}}\|\x-\s \|$ is the diameter of $\mathcal{X} $, which is finite for $\mathcal{X}$ compact.  Equation~\eqref{eq:c_f} evidences that $C_f$ is bounded whenever  $\nabla f(\x)$ 
is $L$-Lipschitz continuous over~$\mathcal{X}$. 

When it comes to RB-FW, the set curvature for an index set $\mathcal{B} \subseteq \mathcal{N}_b$ is commonly used instead of the constant $C_f$~\cite{stofw}  
\begin{equation}\label{eq:setcurv}
	C_f^{\mathcal{B}}:=\sup_{{{ \gamma \in  [0, 1] \atop \x \in \mathcal{X}} } \atop \{\s_n \in \mathcal{X}_{n}\}_{n\in\mathcal{B}} }  \frac{2}{\gamma^2} \Big(f(\y) - f(\x)- \sum_{n\in\mathcal{B}}\langle \y_n -\x_n, \nabla_{\x_n} f(\x) \rangle \Big)
\end{equation}
where
\begin{equation*}\label{eq:y_n}
	\y_n :=\left\{ \begin{array}{ll}
		(1-\gamma)\x_n + \gamma \s_n ,		&n\in \mathcal{B}\\
		\x_n ,		&n\in \mathcal{N}_b \setminus\mathcal{B}\\
	\end{array}\right.
\end{equation*}
and $\y^\top := [\y_1^\top,\ldots ,\y_{N_b}^\top]$. The \emph{expected set curvature} for a $\mathcal{B}$ selected uniformly at random with $|\mathcal{B}|=B$  can thus be expressed as
\begin{equation}
	\bar C_f^{B}:= \mathbb{E}_{\mathcal{B}}\left[C_f^{\mathcal{B}}\right]= \binom{N_b}{B}^{-1}\sum_{\{\mathcal{B}:~\mathcal{B} \subseteq \mathcal{N}_b, |\mathcal{B}| =B \}} C_f^{\mathcal{B}}
\end{equation}
where  $\binom{N_b}{B}:= N_b!/\big( B!(N_b-B)! \big)$. It is easy to verify that $\bar{C}_f^{B} \leq C_f$ by observing from~\eqref{eq:cf} and~\eqref{eq:setcurv}  that $C_f^{\mathcal{B}} \leq C_f $, $\forall \mathcal{B} \subseteq \mathcal{N}_b$. Note however that $\bar{C}_f^{B} = C_f^{\mathcal{B}}  = C_f$,  when $B=N_b$.

The second quantity of interest is the so-termed \emph{duality gap}  
\begin{equation}\label{eq:gxk0}
g(\x) : = \sup_{\s \in \mathcal{X}}~(\x - \s)^\top \nabla f(\x), \quad \x\in \mathcal{X}
\end{equation}
whose name  stems from Fenchel duality; see~\cite[Appendix D]{lacoste2013block},~\cite[Section 2]{jaggi2013revisiting}. Clearly, for the constrained problem~\eqref{eq:problem}, $\x$ is a stationary point if and only if $g(\x)=0$. In addition,  it holds that $g(\x)\geq 0,~\forall \x\in \mathcal{X}$, since $(\x - \s)^\top \nabla f(\x)=0$ for $\s = \x$. Thus, $g(\x)$ quantifies the distance of $\x$ from a stationary point of $f(\x)$~\cite{lacoste2016convergence}. 


\section{Feasibility-Ensuring Step Sizes for RB-FW}\label{sec:bcfw}

To motivate the need for novel step sizes, this section starts by showing that $\gamma_t$ in~\eqref{eq:badstep} does not guarantee feasibility of the iterates $\{\mathbf{x}^t\}$. It then introduces two families of feasibility-ensuring step size sequences, and proves that the iterates they generate are convergent for convex objectives. Moreover, these families are shown to offer a  gamut of decay rates, thereby allowing for a flexible selection of the most suitable step size for a~given~problem. 

With e.g., $N_b=10^3$ and $B=2$, the step size in~\eqref{eq:badstep} will be $\gamma_t >1,~\forall t< 500$. \textcolor{black}{As a result, step 5 of 
Algorithm~\ref{alg:bcfw} can generate infeasible iterates $\x^t$, which render RB-FW unstable since the gradient of the objective at the resulting $\x^t$ may not even be defined.} For example, consider applying Algorithm~\ref{alg:bcfw} with $N_b=100,~B=10$ and step size as in~\eqref{eq:badstep}  to solve the smooth and convex program
\begin{align}\label{eq:ex}
	\minimize_{ \{x_n\}_{n=1}^{100}}\quad &\sum_{n=1}^{100} (x_n)^2-\log x_n  \\
	\subjectto \quad & 2\leq x_n\leq 3, \quad  n = 1,\ldots, 100\nonumber.
\end{align}

Initializing with $\{x_n^1 =3\}_{n\in \mathcal{N}_b}$, it is easy to verify that  $\{s_n^1 =2\}_{n\in \mathcal{B}_1}$ and $\{x_n^2 =-11/3\}_{n\in \mathcal{B}_1}$, implying that $f(\x^2)$ and $\nabla f(\x^2)$ do not exist. Thus, the parallel RB-FW algorithm in~\cite{stofw}, whose step size is given by~\eqref{eq:badstep}, cannot solve~\eqref{eq:ex}. 

In a nutshell, existing step sizes do not guarantee feasibility of RB-FW iterates. Besides, the decay rates of existing step sizes can not be flexibly adjusted to optimize convergence in a given problem; see Remark~\ref{remark:1}. To fill this gap, convergence analysis of RB-FW will be pursued first for a rich class of step sizes.

\subsection{Convergence of RB-FW for convex programs} \label{sec:cvxBCFW}
For randomized FW, convergence analysis typically focuses on  $f(\x^t)$ and  $g(\x^t)$ in~\eqref{eq:gxk0}~\cite{lacoste2013block, stofw}. 
Let $\x^*$ denote one globally optimal solution of~\eqref{eq:blockproblem}, and define  the \emph{primal sub-optimality} of  $\x^t$ as $h(\x^t):= f(\x^t) - f(\x^*)$.
The next lemma, which quantifies the improvement of $h(\x^t)$ per iteration,  will prove handy in the ensuing convergence~analysis.  
\begin{lemma} \label{lem:declem} 
If  $\{\x^t\}_{t=0,1,\ldots}$ is generated by Algorithm~\ref{alg:bcfw} with an arbitrary predefined step-size sequence $\{ \gamma_t\}_{t=0,1,\ldots} $ satisfying $\gamma_t \in [0,1]~\forall t$, then it holds that
\begin{equation}\label{eq:dec}
\mathbb E\left[ h(\x^{t+1}) \right] \leq\mathbb E\left[ h(\x^t)\right] -\alpha \gamma_t \mathbb E\left[g(\x^t)\right]+{\gamma_t^2 \bar C_f^{B}}/{2}  
\end{equation}
for $t\ge 0$, where the expectation is taken over $\{\mathcal{B_{\tau}}\}_{\tau = 0}^{t} $.
\end{lemma}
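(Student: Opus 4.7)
The plan is to derive the recursion by first applying the set-curvature inequality at iteration $t$ to bound $f(\x^{t+1})$ in terms of $f(\x^t)$, and then taking the expectation over the random block index set $\mathcal{B}_t$, conditioned on the history $\{\mathcal{B}_\tau\}_{\tau<t}$.

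First, I would use the definition of $C_f^{\mathcal{B}_t}$ in~\eqref{eq:setcurv}. By Algorithm~\ref{alg:bcfw}, we have $\x_n^{t+1}=(1-\gamma_t)\x_n^t+\gamma_t \s_n^t$ for $n\in\mathcal{B}_t$ and $\x_n^{t+1}=\x_n^t$ otherwise, with $\gamma_t\in[0,1]$ and $\s_n^t\in\mathcal{X}_n$. Hence specializing~\eqref{eq:setcurv} to $\y=\x^{t+1}$ gives
\begin{equation*}
f(\x^{t+1})\le f(\x^t)+\gamma_t\!\!\sum_{n\in\mathcal{B}_t}\!\langle \s_n^t-\x_n^t,\,\nabla_{\x_n}f(\x^t)\rangle+\frac{\gamma_t^2}{2}C_f^{\mathcal{B}_t}.
\end{equation*}

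Next, I would take conditional expectation over $\mathcal{B}_t$ given $\x^t$. Since $\mathcal{B}_t$ is uniform over size-$B$ subsets of $\mathcal{N}_b$, each $n$ belongs to $\mathcal{B}_t$ with probability $\alpha=B/N_b$, and the quantities $\s_n^t$ and $\nabla_{\x_n}f(\x^t)$ are deterministic given $\x^t$. Hence
\begin{equation*}
\mathbb{E}_{\mathcal{B}_t}\!\bigg[\sum_{n\in\mathcal{B}_t}\!\langle \s_n^t-\x_n^t,\nabla_{\x_n}f(\x^t)\rangle\bigg]=\alpha\sum_{n=1}^{N_b}\!\langle \s_n^t-\x_n^t,\nabla_{\x_n}f(\x^t)\rangle,
\end{equation*}
and $\mathbb{E}_{\mathcal{B}_t}[C_f^{\mathcal{B}_t}]=\bar C_f^{B}$ by definition. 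The crucial observation is that the product structure $\mathcal{X}=\mathcal{X}_1\times\cdots\times\mathcal{X}_{N_b}$ together with the block linear oracle~\eqref{eq:distlinear} yields
\begin{equation*}
\sum_{n=1}^{N_b}\!\langle \s_n^t-\x_n^t,\nabla_{\x_n}f(\x^t)\rangle=\min_{\s\in\mathcal{X}}\langle \s-\x^t,\nabla f(\x^t)\rangle=-g(\x^t),
\end{equation*}
where the last equality uses the definition of the duality gap in~\eqref{eq:gxk0}.

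Combining the three displays, subtracting $f(\x^*)$ from both sides, and invoking the tower property to take the total expectation over $\{\mathcal{B}_\tau\}_{\tau=0}^{t-1}$ produces~\eqref{eq:dec}. The routine manipulations are essentially a conditional-expectation exercise; the only substantive point, and the one I would flag as the conceptual pivot, is that the block-separability of $\mathcal{X}$ lets the full Frank--Wolfe duality gap $g(\x^t)$ re-emerge after averaging the partial sums over random blocks, which is what converts the probability $\alpha$ of including each block into the factor $\alpha$ multiplying $g(\x^t)$ in~\eqref{eq:dec}.
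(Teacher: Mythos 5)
Your proposal is correct and follows essentially the same route as the paper's proof: apply the set-curvature inequality~\eqref{eq:setcurv} with $\y=\x^{t+1}$, take the conditional expectation over the uniformly random $\mathcal{B}_t$ so that each block contributes with probability $\alpha$ and $C_f^{\mathcal{B}_t}$ averages to $\bar C_f^{B}$, identify the full sum over $\mathcal{N}_b$ with $-g(\x^t)$ via the block-decoupled linear oracle, subtract $f(\x^*)$, and finish with the tower property.
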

A detailed proof can be found in~\cite{lacoste2013block, stofw}; see also part \ref{app:A} of the Appendix for an outline. Note that Lemma~\ref{lem:declem} can be applied regardless of whether  $f(\x)$ is convex or not.





Aiming to upper bound $\mathbb E \left[ h(\mathbf{x}^{t})\right]$, we will consider that  $\{\gamma_t\}_{t=0,1,\ldots}$ satisfy
\begin{subequations}\label{eq:qualcon}
	\begin{align}
     0 < \gamma_t \leq 1, &\quad \forall t\geq 0 \label{eq:qualcona}\\
     \frac{1-\alpha \gamma_{t+1}}{\gamma_{t+1} ^2}  \leq \frac{1}{\gamma_{t} ^2}, &\quad \forall t\geq 0 \label{eq:qualconb}.
	\end{align}
\end{subequations}
It can be easily seen that~\eqref{eq:qualconb} is equivalent to 
\begin{equation*}
\gamma_{t+1} \ge \frac{\gamma_t}{2} \left(\sqrt{\alpha^2\gamma_t^2 +4} -\alpha \gamma_t\right)
\end{equation*}
which implies that \eqref{eq:qualconb} limits how rapidly $\{\gamma_t\}_{t = 0,1, \ldots}$ can decrease. 
Condition~\eqref{eq:qualcon} is very general and subsumes existing step sizes as special cases. 
For example, if $B=N_b$, Algorithm~\ref{alg:bcfw} boils down to Algorithm~\ref{alg:fw}, for which \eqref{eq:typicalr} is typically adopted~\cite{jaggi2013revisiting}. For $\gamma_t$ as in~\eqref{eq:typicalr}, it is clear that \eqref{eq:qualcona} is satisfied, whereas \eqref{eq:qualconb} follows from $(t+1)(t+3)\leq (t+2)^2$. Another example arises if $B=1$, in which case Algorithm~\ref{alg:bcfw} reduces to Algorithm 4 in~\cite{lacoste2013block}. The sequence 
\begin{equation}\label{eq:bcfwgamma}
	\gamma_t = \frac{2N_b}{t+2N_b}
\end{equation}
which was proposed in~\cite{lacoste2013block} for Algorithm~\ref{alg:bcfw}, clearly satisfies \eqref{eq:qualcona}, and also \eqref{eq:qualconb} since it holds that 
$(t+1+2N_b)^2 -  2(t+1+2N_b) \leq (t+2N_b).$ The step size~\eqref{eq:badstep} also satisfies~\eqref{eq:qualconb} since $(\alpha^2 t+\alpha^2+2/N_b)^2 -  2\alpha^2(\alpha^2 t+\alpha^2+2/N_b) \leq (\alpha^2 t+ 2/N_b)^2$, but fails to satisfy~\eqref{eq:qualcona}, which ensures feasible iterates. However, upon observing that $\gamma_t$ in~\eqref{eq:badstep} satisfies $\gamma_t \leq 1$ for $t \geq \tilde{t}:= (2BN_b-2N_b)/B^2$, one deduces that the shifted sequence
\begin{equation}\label{eq:gammatilde}
\tilde{\gamma}_t := \gamma_{t+\tilde{t}} =  \frac{2}{\alpha t +2} 
\end{equation}	
does satisfy~\eqref{eq:qualcona}, and therefore constitutes a feasible alternative to~\eqref{eq:badstep}. Furthermore, it also satisfies \eqref{eq:qualconb} because $(\alpha t+2 +\alpha)(\alpha t+2 -\alpha) \leq (\alpha t  +2)^2$.

To proceed with convergence rate analysis for a broad class of step sizes, an upper bound on $\mathbb E \left[ h(\mathbf{x}^{t})\right]$ for step sizes satisfying   \eqref{eq:qualcon} will be developed.

\begin{theorem}[Primal convergence]
	\label{thm:stocon}
 If  $f(\mathbf{x})$ is convex and  $\{\gamma_t\}_{t=0,1,\ldots}$ satisfies~\eqref{eq:qualcon}, the iterates of Algorithm~\ref{alg:bcfw} satisfy  
\begin{equation}\label{eq:storesult}
\mathbb E \left[ h(\mathbf{x}^{t}) \right]\leq \frac{1-\alpha\gamma_{0}}{\gamma_{0}^2}\gamma_{t-1}^2 h(\mathbf{x}^{0}) + \frac{t\bar{C}_f^{B}}{2}\gamma_{t-1}^2,\quad t\ge 1.
\end{equation}
\end{theorem}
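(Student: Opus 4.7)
The plan is to combine Lemma~\ref{lem:declem} with convexity of $f$ and then propagate the resulting one-step recursion by induction on $t$, using condition \eqref{eq:qualconb} as the mechanism that relates consecutive step-size squares.

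First I would exploit convexity. Since $f$ is convex, for any $\mathbf{x} \in \mathcal{X}$ the first-order inequality yields $f(\mathbf{x}^*) \geq f(\mathbf{x}) + \nabla f(\mathbf{x})^\top (\mathbf{x}^* - \mathbf{x})$, so $h(\mathbf{x}) = f(\mathbf{x}) - f(\mathbf{x}^*) \leq \nabla f(\mathbf{x})^\top (\mathbf{x} - \mathbf{x}^*) \leq \sup_{\mathbf{s} \in \mathcal{X}} \nabla f(\mathbf{x})^\top (\mathbf{x} - \mathbf{s}) = g(\mathbf{x})$. Feeding this lower bound on $g(\mathbf{x}^t)$ into Lemma~\ref{lem:declem} collapses \eqref{eq:dec} into the purely recursive primal inequality
\begin{equation*}
\mathbb{E}[h(\mathbf{x}^{t+1})] \leq (1-\alpha \gamma_t)\,\mathbb{E}[h(\mathbf{x}^t)] + \frac{\gamma_t^2 \bar{C}_f^B}{2},
\end{equation*}
which is valid because $\alpha \gamma_t \in [0,1]$ by \eqref{eq:qualcona} and $\alpha \leq 1$, so the multiplier is nonnegative and the direction of the inequality is preserved.

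Next I would prove the theorem by induction on $t \geq 1$. For the base case $t=1$, the one-step recursion above with $t=0$ immediately gives $\mathbb{E}[h(\mathbf{x}^1)] \leq (1-\alpha\gamma_0) h(\mathbf{x}^0) + \gamma_0^2 \bar{C}_f^B/2$, which matches \eqref{eq:storesult} once one writes $(1-\alpha\gamma_0) = \tfrac{1-\alpha\gamma_0}{\gamma_0^2}\gamma_0^2$. For the inductive step, assume the bound at iteration $t$ and multiply it by the nonnegative factor $(1-\alpha\gamma_t)$, then add $\gamma_t^2 \bar{C}_f^B/2$. The key algebraic move is to rewrite condition \eqref{eq:qualconb} (after a shift of index) as $(1-\alpha\gamma_t)\gamma_{t-1}^2 \leq \gamma_t^2$, and apply this to both the $h(\mathbf{x}^0)$-term and the accumulated $\bar{C}_f^B$-term. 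This converts $(1-\alpha\gamma_t)\gamma_{t-1}^2$ into $\gamma_t^2$ inside each summand, so the hypothesis becomes
\begin{equation*}
\mathbb{E}[h(\mathbf{x}^{t+1})] \leq \frac{1-\alpha\gamma_0}{\gamma_0^2}\gamma_t^2 h(\mathbf{x}^0) + \frac{t\bar{C}_f^B}{2}\gamma_t^2 + \frac{\gamma_t^2 \bar{C}_f^B}{2},
\end{equation*}
which, after merging the last two terms into $\tfrac{(t+1)\bar{C}_f^B}{2}\gamma_t^2$, is exactly the desired bound at iteration $t+1$.

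The main obstacle, in my view, is not the induction itself but recognizing how to split \eqref{eq:qualconb} into the clean multiplicative contraction $(1-\alpha\gamma_t)\gamma_{t-1}^2 \leq \gamma_t^2$, and checking that the constant term accumulated over $t$ iterations still admits the factorization $\tfrac{t}{2}\bar{C}_f^B\gamma_{t-1}^2$ rather than a more complicated sum of $\gamma_\tau^2$'s; this tight packaging is precisely what \eqref{eq:qualconb} is designed to enable. A minor sanity check I would also carry out is that the leading coefficient $(1-\alpha\gamma_0)/\gamma_0^2$ is nonnegative, which follows from $\gamma_0 \in (0,1]$ and $\alpha \in (0,1]$, so the stated bound is never vacuous.
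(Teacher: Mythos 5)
Your proposal is correct and follows essentially the same route as the paper: both use convexity to establish $h(\mathbf{x}^t)\leq g(\mathbf{x}^t)$, plug this into Lemma~\ref{lem:declem} to obtain the contraction $\mathbb{E}[h(\mathbf{x}^{t+1})]\leq(1-\alpha\gamma_t)\mathbb{E}[h(\mathbf{x}^t)]+\gamma_t^2\bar{C}_f^{B}/2$, and then chain it via \eqref{eq:qualconb}. Your forward induction with the multiplicative form $(1-\alpha\gamma_t)\gamma_{t-1}^2\leq\gamma_t^2$ is just the paper's telescoping of $\mathbb{E}[h(\mathbf{x}^{t+1})]/\gamma_t^2$ read in the opposite direction, so the two arguments are equivalent.
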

\begin{proof}
Since $f(\x)$ is differentiable, convexity of $f(\x)$ implies that
\begin{equation}\label{eq:cvxg}
f(\x^t) - f(\x^*) \leq (\x^t-\x^*)^{\top}\nabla f(\x^t)
\end{equation}
where $\x^*$ denotes any solution to~\eqref{eq:blockproblem}. Combining~\eqref{eq:gxk0} and~\eqref{eq:cvxg}  yields
\begin{equation} \label{eq:gteq}
g(\x^t) \geq f(\x^t)-f(\x^*) = h(\x^t)\geq 0.
\end{equation}	
Thus, $\mathbb{E}[g(\x^t)] \geq \mathbb{E}[h(\x^t)]$ and  \eqref{eq:dec} can be rewritten as	
\begin{equation}\label{eq:dec1}
\mathbb E\left[ h(\x^{t+1}) \right] \leq (1-\alpha \gamma_t) \mathbb E\left[ h(\x^t)\right]  +{\gamma_t^2 \bar C_f^{B}}/{2} . 
\end{equation}	
Dividing both sides of~\eqref{eq:dec1} by $\gamma_t^2$ gives rise to
\begin{equation}\label{eq:dec2}
\frac{1}{\gamma_t^2}\mathbb E\left[ h(\x^{t+1}) \right] \leq \frac{1-\alpha \gamma_t}{\gamma_t^2} \mathbb{E}\left[ h(\x^t)\right]+ \frac{ \bar{C}_f^{B}}{2}.
\end{equation}
Utilizing successively~\eqref{eq:qualconb} and \eqref{eq:dec2} yields
\begin{align}
\frac{1}{\gamma_t^2}\mathbb{E}[h(\mathbf{x}^{t+1})]
&\leq \frac{1}{\gamma_{t-1}^2}\mathbb{E}[ h(\mathbf{x}^t)]+ \frac{1}{2}\bar{C}_f^{B}\nonumber\\
&\leq \frac{1-\alpha\gamma_{t-1}}{\gamma_{t-1}^2}\mathbb{E}[ h(\mathbf{x}^{t-1})]+ \frac{1}{2}\bar{C}_f^{B} + \frac{1}{2}\bar{C}_f^{B}\nonumber\\
&\leq \ldots \leq \frac{1-\alpha\gamma_{0}}{\gamma_{0}^2} h(\mathbf{x}^{0})+ \frac{t+1}{2}\bar{C}_f^{B} \label{eq:iteration_com}
\end{align}
where the last inequality uses $\mathbb{E}[h(\x^0)] = h(\x^0)$. Therefore, 
\begin{equation*}
\mathbb E \left[ h(\mathbf{x}^{t+1}) \right]\leq \frac{1-\alpha\gamma_{0}}{\gamma_{0}^2}\gamma_t^2 h(\mathbf{x}^{0}) + \frac{t+1}{2}\gamma_t^2 \bar{C}_f^{B}
\end{equation*}
which establishes~\eqref{eq:storesult}.
\end{proof}

Theorem~\ref{thm:stocon} generalizes existing results on the convergence of Algorithm~\ref{alg:bcfw}, which apply only for specific step sizes either assume $B=1$~\cite{lacoste2013block} or $B=N_b$~\cite{jaggi2013revisiting}. Thus, Theorem~\ref{thm:stocon} sheds light on  step size design for arbitrary $B$ by providing  computational guarantees for Algorithm~\ref{alg:bcfw} with any step-size sequence satisfying~\eqref{eq:qualcon}.

Another quantity of interest to characterize the convergence of Algorithm~\ref{alg:bcfw} is  $g(\x^t)$, which can be used to assess how close is $\x^t$ from being a solution~\cite{lacoste2013block},~\cite{stofw} since $g(\x^t) \ge h(\x^t)$; cf.~\eqref{eq:gteq}. 
However, since finding upper bounds on  $ g(\x^t)$ is difficult~\cite{lacoste2013block},~\cite{lacoste2016convergence},~\cite{stofw},  bounds on the minimal expected duality gap until iteration $t$,  defined as~\cite{lacoste2013block},~\cite{stofw}
 \begin{equation}\label{eq:gt}
 g_t : = \min_{k \in  \{0,1, \ldots t\}}  \mathbb E \left[g(\x^k)\right]
 \end{equation}
 are pursued next.

\begin{theorem}[Primal-dual convergence]\label{thm:stodualcon}
Let $\{\gamma_t\}_{t=0,1,\ldots}$ satisfy~\eqref{eq:qualcon} and $\gamma_{t+1} \leq \gamma_t,~\forall t\geq 0 $.
If \textcolor{black}{$f(\mathbf{x})$ is convex} and $\{ \x^t\}_{t=1,2,\ldots}$ is generated by Algorithm~\ref{alg:bcfw}, then for all $K \in \{1,\ldots, t \}$ it holds that
\begin{equation} \label{eq:stodualcon}
{g}_t \leq \frac{\mathbb{E} \left[h(\x^K)\right]}{\alpha(t-K+1)\gamma_t}+ \frac{\bar{C}_f^{B}\gamma_K^2}{2\alpha\gamma_t}\;.
\end{equation}

\end{theorem}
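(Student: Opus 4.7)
The plan is to obtain the bound on $g_t$ by telescoping the per-iteration inequality of Lemma~\ref{lem:declem} over a window of indices, and then invoking monotonicity of the step-size sequence. First I would rewrite Lemma~\ref{lem:declem} in the rearranged form
\begin{equation*}
\alpha \gamma_k \,\mathbb E[g(\x^k)] \;\leq\; \mathbb E[h(\x^k)] - \mathbb E[h(\x^{k+1})] + \frac{\gamma_k^2 \bar C_f^{B}}{2},
\end{equation*}
which is valid for every $k \geq 0$ under~\eqref{eq:qualcona} alone (no convexity yet).

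Next I would sum this inequality from $k=K$ to $k=t$. The right-hand side telescopes to $\mathbb E[h(\x^K)] - \mathbb E[h(\x^{t+1})] + \tfrac{\bar C_f^{B}}{2}\sum_{k=K}^{t}\gamma_k^2$, and discarding the nonnegative term $\mathbb E[h(\x^{t+1})]\ge 0$ (which uses that $\x^*$ is a global minimizer, guaranteed here by convexity) yields
\begin{equation*}
\alpha \sum_{k=K}^{t}\gamma_k\,\mathbb E[g(\x^k)] \;\leq\; \mathbb E[h(\x^K)] + \frac{\bar C_f^{B}}{2}\sum_{k=K}^{t}\gamma_k^2.
\end{equation*}
On the left-hand side, the definition $g_t = \min_{k\in\{0,\ldots,t\}} \mathbb E[g(\x^k)]$ gives $\mathbb E[g(\x^k)] \geq g_t$ for all $k\in\{K,\ldots,t\}$, hence $\sum_{k=K}^{t}\gamma_k\,\mathbb E[g(\x^k)] \geq g_t \sum_{k=K}^{t}\gamma_k$.

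Finally I would use the monotonicity hypothesis $\gamma_{k+1}\le \gamma_k$ to bound the two sums in opposite directions: $\sum_{k=K}^{t}\gamma_k \geq (t-K+1)\gamma_t$ and $\sum_{k=K}^{t}\gamma_k^2 \leq (t-K+1)\gamma_K^2$. Substituting these into the previous display gives
\begin{equation*}
\alpha(t-K+1)\gamma_t\, g_t \;\leq\; \mathbb E[h(\x^K)] + \frac{\bar C_f^{B}(t-K+1)\gamma_K^2}{2},
\end{equation*}
and dividing by $\alpha(t-K+1)\gamma_t$ yields exactly~\eqref{eq:stodualcon}. I do not expect a serious obstacle: the only subtlety is pairing the two summation bounds correctly (a lower bound on $\sum\gamma_k$ and an upper bound on $\sum\gamma_k^2$), together with the careful bookkeeping that the telescoping window starts at $K$ so that the initial term on the right is $\mathbb E[h(\x^K)]$ rather than $h(\x^0)$; condition~\eqref{eq:qualconb} is not needed for this argument, only~\eqref{eq:qualcona} and monotonicity are.
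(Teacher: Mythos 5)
Your proposal is correct and follows essentially the same route as the paper's proof: rearranging Lemma~\ref{lem:declem}, telescoping from $k=K$ to $t$, dropping $\mathbb E[h(\x^{t+1})]\ge 0$, and using monotonicity to bound $\sum_{k=K}^t\gamma_k$ from below by $(t-K+1)\gamma_t$ and $\sum_{k=K}^t\gamma_k^2$ from above by $(t-K+1)\gamma_K^2$. Your side remarks are also accurate, in particular that condition~\eqref{eq:qualconb} plays no role here.
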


\begin{proof}
	Lemma~\ref{lem:declem} asserts that
	\begin{equation}\label{eq:dualdecrease}
	\alpha \gamma_k \mathbb E[g(\x^k)] \leq  \mathbb E\left[ h(\x^{k}) \right] -\mathbb E\left[ h(\x^{k+1})\right]+{\gamma_k^2 \bar C_f^{B}}/{2}. 
	\end{equation}	
From $g_t\leq \mathbb{E}[g(\x^k)]$ and~\eqref{eq:dualdecrease}, it follows that
\begin{align}
\alpha {g}_t \sum_{k = K}^t \gamma_k \leq& \alpha \sum_{k = K}^t \gamma_k \mathbb{E} [ g(\x^k)] \nonumber\\
\leq & \sum_{k = K}^t  \left(\mathbb E\left[ h(\x^{k}) \right] -\mathbb E\left[ h(\x^{k+1})\right]\right)  +\frac{\bar{C}_f^{B}}{2}\sum_{k = K}^t {\gamma}^2_k \nonumber \\
                                   =  &\mathbb E\left[ h(\x^{K}) \right] -\mathbb E\left[ h(\x^{t+1}) \right ] +\frac{\bar{C}_f^{B}}{2} \sum_{k = {K}}^t {\gamma}^2_k \nonumber \\
                                  \leq & \mathbb{E}\left[ h(\x ^{K})\right] +\frac{\bar{C}_f^{B}}{2} (t-K+1) {\gamma}^2_K \label{eq:deccvxg}
\end{align}
where the last inequality follows from $\mathbb{E}[h(\x^{t+1})] \ge 0$ and $\gamma_{t+1} \leq \gamma_t$. But since 
$\gamma_k \geq \gamma_t,~\forall k \leq t$, one arrives at
\begin{equation}\label{eq:Kg}
 \sum_{k = K}^t \gamma_k \geq (t-K+1)\gamma_t.
\end{equation}
Finally, \eqref{eq:stodualcon} follows after combining \eqref{eq:deccvxg}  with~\eqref{eq:Kg}, and dividing both sides of the resulting inequality by $\alpha (t-K+1)\gamma_t$.
\end{proof}

Theorem~\ref{thm:stodualcon} characterizes the primal-dual convergence of RB-FW for any non-increasing step size satisfying~\eqref{eq:qualcon}. 
\textcolor{black}{
Plugging~\eqref{eq:storesult} into~\eqref{eq:stodualcon} and fixing the step-size sequence yields an upper bound on $g_t$ that can be minimized with respect to $K$ to obtain the convergence rate of $g_t$. This approach will be pursued in Section~\ref{sec:proposed}. }
 

\subsection{Proposed step sizes} \label{sec:proposed}
 This section develops  two classes of step sizes obeying~\eqref{eq:qualcon}  for arbitrary values of $B$. Theorems~\ref{thm:stocon} and~\ref{thm:stodualcon} will then be invoked to derive the resulting convergence rates. To start with, consider the following general   family of diminishing step-size sequences for fixed
 $q\in (0, \alpha]$ and decay rate $\rho \in (0.5, 1]$:
\begin{equation} \label{eq:slowstepsto}
\boxed{
\gamma_t = \frac{2}{q t^\rho +2},\quad \forall t\ge 0.
}
\end{equation}
As will be seen, this family includes, as special cases, the step sizes in~\eqref{eq:typicalr}, \textcolor{black}{\eqref{eq:bcfwgamma}, and \eqref{eq:gammatilde}}.

\begin{lemma}\label{prop:slowstepsto} 
If $\{\gamma_t\}_{t=0,1,\ldots}$ is given by~\eqref{eq:slowstepsto}, it satisfies~\eqref{eq:qualconb}.
\end{lemma}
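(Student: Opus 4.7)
The plan is to verify~\eqref{eq:qualconb} by direct substitution. I would first introduce the shorthand $a_t := q t^\rho + 2$, so that $1/\gamma_t = a_t/2$, and clear denominators: after multiplying through by $4$, \eqref{eq:qualconb} is equivalent to
\begin{equation*}
a_{t+1}^2 - 2\alpha\, a_{t+1} \;\leq\; a_t^2 .
\end{equation*}
Writing $u := q(t+1)^\rho$ and $v := q t^\rho$, substituting $a_{t+1}=u+2$ and $a_t=v+2$, expanding, and cancelling the common constant $4$, this reduces to the compact inequality
\begin{equation*}
(u-v)(u+v+4) \;\leq\; 2\alpha(u+2) .
\end{equation*}

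The remaining task is to control the left-hand side using elementary monotonicity facts about $t\mapsto t^\rho$ for $\rho\in(0.5,1]$. Specifically, I would use: \emph{(i)} $v\leq u$, because $t\mapsto t^\rho$ is non-decreasing; and \emph{(ii)} $u-v\leq q$, equivalent to $(t+1)^\rho - t^\rho \leq 1$. Fact \emph{(ii)} holds for $\rho\leq 1$ because the map $t\mapsto(t+1)^\rho - t^\rho$ is non-increasing on $t>0$ (its derivative $\rho\bigl[(t+1)^{\rho-1}-t^{\rho-1}\bigr]$ is non-positive since $\rho-1\le0$) and equals $1$ at $t=0$; alternatively, the mean value theorem gives $(t+1)^\rho - t^\rho = \rho\xi^{\rho-1}\leq 1$ for some $\xi\ge 1$.

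Chaining (i), (ii), and the standing hypothesis $q\leq\alpha$ then yields
\begin{equation*}
(u-v)(u+v+4) \;\leq\; q(u+v+4) \;\leq\; \alpha(u+v+4) \;\leq\; \alpha(2u+4) = 2\alpha(u+2),
\end{equation*}
which is exactly the required inequality. The argument is essentially bookkeeping and presents no real obstacle; the one point worth highlighting is the uniform bound $(t+1)^\rho - t^\rho \leq 1$, which is precisely what couples the constraint $\rho\leq 1$ with the constraint $q\leq\alpha$ to close the chain. For $\rho>1$ the increment $u-v$ would grow without bound in $t$, so the range on $\rho$ is essentially tight for this direct substitution approach.
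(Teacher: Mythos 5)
Your proof is correct and follows essentially the same route as the paper's: a direct substitution that reduces \eqref{eq:qualconb} to the two elementary facts $q\le\alpha$ and $(t+1)^\rho-t^\rho\le 1$ (the paper proves the latter via the auxiliary function $\varphi(x)=(x+c)^\rho-x^\rho-c$, you via the derivative of the increment — the same monotonicity argument). The only difference is cosmetic bookkeeping: you factor the cleared inequality as $(u-v)(u+v+4)\le 2\alpha(u+2)$, whereas the paper completes the square and drops a $-\alpha^2$ term.
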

\begin{proof}
See part~\ref{app:C} of the Appendix.
\end{proof}

Upon noticing that $0< \gamma_t \leq 1$ and $ \gamma_{t+1} \leq  \gamma_t $ for $\{\gamma_t\}_{t=0,1,\ldots} $ in~\eqref{eq:slowstepsto},
 the convergence rate of RB-FW can be derived by appealing to Theorems~\ref{thm:stocon} and \ref{thm:stodualcon} as follows.
\begin{corollary} \label{cor:slowstep}
For convex $f(\mathbf{x})$, the iterates $\{\mathbf{x}^t\}_{t=1,2,\ldots}$ of Algorithm~\ref{alg:bcfw} with step size~\eqref{eq:slowstepsto}  satisfy
\begin{equation}\label{eq:stoprimalcon}
\mathbb E \left[ h(\mathbf{x}^{t}) \right]\leq \frac{4\left(1-\alpha\right) h(\mathbf{x}^{0})}{\left[q(t-1)^\rho +2\right]^2 } + \frac{2t\bar{C}_f^{B}}{\left[q(t-1)^\rho +2\right]^2} 
\end{equation}
 and
\begin{equation} \label{eq:stodual}
{g}_t \leq \frac{(2\rho+1)^{2\rho+1}(qt^\rho +2) }{\alpha q^2(2\rho)^{2\rho}} \cdot \frac{(t+1)\bar{C}_f^{B} + 2(1-\alpha)h(\x^0)}{t^{2\rho+1}}.
\end{equation}
\end{corollary}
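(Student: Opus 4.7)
The proof splits cleanly into the two claimed inequalities, using Theorems~\ref{thm:stocon} and~\ref{thm:stodualcon} as black boxes. The plan is a direct substitution for the primal bound, followed by a one-dimensional optimization in $K$ for the primal-dual bound.

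For the primal bound \eqref{eq:stoprimalcon}, I would first verify the two hypotheses of Theorem~\ref{thm:stocon}: that $\gamma_t\in(0,1]$ (immediate, since $qt^\rho+2\ge 2$) and that \eqref{eq:qualconb} holds (supplied by Lemma~\ref{prop:slowstepsto}). Then I would observe that $\gamma_0 = 2/(q\cdot 0^\rho + 2)= 1$, so the leading coefficient collapses as $(1-\alpha\gamma_0)/\gamma_0^2 = 1-\alpha$. Substituting $\gamma_{t-1}^2 = 4/(q(t-1)^\rho + 2)^2$ into \eqref{eq:storesult} yields \eqref{eq:stoprimalcon} verbatim.

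For the primal-dual bound \eqref{eq:stodual}, the plan has three stages. \textbf{Stage 1:} Plug the step size into Theorem~\ref{thm:stodualcon} (monotonicity $\gamma_{t+1}\le\gamma_t$ is clear) and bound $\mathbb{E}[h(\x^K)]$ using \eqref{eq:stoprimalcon} at index $K$. Writing $\gamma_K^2/\gamma_t = 2(qt^\rho+2)/(qK^\rho+2)^2$ and $1/[(t-K+1)\gamma_t] = (qt^\rho+2)/[2(t-K+1)]$, this gives
\begin{equation*}
g_t \leq \frac{qt^\rho+2}{\alpha}\!\left[\frac{K\bar C_f^B + 2(1-\alpha)h(\x^0)}{(t-K+1)(q(K-1)^\rho+2)^2} + \frac{\bar C_f^B}{(qK^\rho+2)^2}\right].
\end{equation*}
\textbf{Stage 2:} Loosen the denominators to the common expression $q^2K^{2\rho}$ via $(qK^\rho+2)^2 \ge q^2K^{2\rho}$ (and likewise for the $(K-1)$ term), and exploit the identity $K + (t-K+1) = t+1$ to merge the two numerator terms:
\begin{equation*}
g_t \leq \frac{(qt^\rho+2)\bigl[(t+1)\bar C_f^B + 2(1-\alpha)h(\x^0)\bigr]}{\alpha\, q^2\, K^{2\rho}(t-K+1)}.
\end{equation*}
\textbf{Stage 3:} Minimize over $K\in\{1,\ldots,t\}$ by maximizing $K^{2\rho}(t-K+1)$. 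Setting the derivative to zero gives the interior maximizer $K^\ast = 2\rho(t+1)/(2\rho+1)$, at which $K^{\ast 2\rho}(t-K^\ast+1) = (2\rho)^{2\rho}(t+1)^{2\rho+1}/(2\rho+1)^{2\rho+1}$. Substituting back and replacing $(t+1)^{2\rho+1}$ by the smaller quantity $t^{2\rho+1}$ in the denominator (which only loosens the bound) delivers \eqref{eq:stodual}.

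The main technical obstacle is the index-shift in Stage~2: the primal bound naturally carries $(q(K-1)^\rho+2)^2$ while the $\gamma_K^2$ term carries $(qK^\rho+2)^2$, so these denominators must be unified without spoiling the constants that appear in the claim. The clean route is to drop both square-binomials down to the common minorant $q^2K^{2\rho}$; this explains why the factor $q^2$ appears in the final bound and why the constants $(2\rho+1)^{2\rho+1}/(2\rho)^{2\rho}$ emerge purely from the continuous optimization in Stage~3. Rounding $K^\ast$ to an integer contributes only a lower-order perturbation that does not affect the stated rate.
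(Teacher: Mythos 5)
Your proposal is correct in substance and follows essentially the same route as the paper: the primal bound is a direct substitution into Theorem~\ref{thm:stocon} with $\gamma_0=1$, and the dual bound combines Theorem~\ref{thm:stodualcon} with the primal bound at index $K$, merges the two curvature terms via $K+(t-K+1)=t+1$, and optimizes over $K$. The only cosmetic difference is the order of operations in the middle: the paper first uses $\gamma_K\le\gamma_{K-1}$ to put everything over $\gamma_{K-1}^2$ and then bounds $\gamma_{K-1}^2/(t-K+1)$, whereas you loosen both denominators to the common minorant $q^2K^{2\rho}$ first and then merge; both are valid (your ``likewise for the $(K-1)$ term'' needs $q(K-1)^\rho+2\ge qK^\rho$, which holds since $K^\rho-(K-1)^\rho\le 1$ for $\rho\le 1$ and $q\le 1$ --- this is exactly the paper's auxiliary inequality with $c=2/q$).

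The one place that needs tightening is the last sentence of Stage~3. The statement to be proved carries an exact constant, so ``rounding $K^\ast$ contributes only a lower-order perturbation that does not affect the stated rate'' is not enough: the continuous maximum of $K^{2\rho}(t-K+1)$ is an \emph{upper} bound on what any integer $K$ achieves, and you need a \emph{lower} bound on the value attained at an admissible integer. The fix is one line and is essentially what the paper does with its $\mu$-parametrized ceiling: take $K=\lceil 2\rho t/(2\rho+1)\rceil\in\{1,\ldots,t\}$. Then $K^{2\rho}\ge\bigl(2\rho t/(2\rho+1)\bigr)^{2\rho}$ and, since $K\le 2\rho t/(2\rho+1)+1$, the ``$+1$'' in $t-K+1$ absorbs the ceiling to give $t-K+1\ge t/(2\rho+1)$; hence $K^{2\rho}(t-K+1)\ge (2\rho)^{2\rho}t^{2\rho+1}/(2\rho+1)^{2\rho+1}$, which is precisely the quantity you need after replacing $(t+1)^{2\rho+1}$ by $t^{2\rho+1}$. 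With this substitution your Stage~3 closes and the claimed constants come out exactly.
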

\begin{proof}
	See part~\ref{app:D} of the Appendix.
\end{proof}
Corollary~\ref{cor:slowstep} subsumes existing convergence results as special cases. Indeed, when $B=N_b$, one has that $\mathcal{B}_t = \mathcal{N}_b~\forall t$, which implies that $\bar{C}_f^{B}= C_f$, and Algorithm~\ref{alg:bcfw} reduces to the traditional FW solver. By selecting $q = 1$ and $\rho = 1$, the classical step size in~\eqref{eq:typicalr} is retrieved. From Corollary~\ref{cor:slowstep}, the resulting computational bounds  are  
\begin{equation} \label{eq:h}
 h(\mathbf{x}^{t}) \leq  \frac{2tC_f}{(t+1)^2} \leq \frac{2C_f}{t+2}
\end{equation}
 and
\begin{equation} \label{eq:g}
{g}_t \leq \frac{27C_f }{4}\cdot\frac{(t+1)(t+2)}{t^3}.
\end{equation}
The resulting convergence rate of $h(\x^t)$ coincides with the one in \cite[Theorem 1]{jaggi2013revisiting}.  As for $ g_t$, the bound in \eqref{eq:g} is of the same order as that in \cite[Theorem 2]{jaggi2013revisiting}. 
 
In addition, with $B=1$,  $q = 1/N_b$, and $\rho = 1$, the step size~\eqref{eq:bcfwgamma} proposed in~\cite{lacoste2013block} is recovered.  
From Corollary~\ref{cor:slowstep}, the primal computational bound is 
\begin{align} \label{eq:h1blcok}
 \mathbb{E}[h(\mathbf{x}^{t})] &\leq \frac{4(N_b^2 -N_b)h(\x^0)}{(t-1 +2N_b)^2} + \frac{2tN_b^2C_f^1}{(t-1 +2N_b)^2} \nonumber \\
 &\leq  \frac{4(N_b^2 -N_b)h(\x^0)}{(t-1 +2N_b)^2} +  \frac{2N_b^2 C_f^1}{t+4N_b-2}
\end{align}
where the last inequality follows from
\begin{equation*}
\frac{t}{t+2N_b-1} \leq \frac{t+2N_b-1}{t+4N_b -2}.
\end{equation*} 
Meanwhile, $g_t$ is bounded by
\begin{equation} \label{eq:g1block}
{g}_t \leq \frac{27N_b (t+2N_b) }{4t^3}\left[(t+1)N_b C_f^1 + 2(N_b-1)h(\x^0)\right].
\end{equation}   
Notably, the bound in~\eqref{eq:h1blcok} is \emph{tighter} than the one reported in~\cite[Theorem 2]{lacoste2013block}, while the  bound on $g_t$ in~\eqref{eq:g1block} is of the same order as that  in~\cite[Theorem 2]{lacoste2013block}. 

Finally, note that  Corollary~\ref{cor:slowstep} also characterizes convergence for the step size $\tilde{\gamma}_t$ in~\eqref{eq:gammatilde}, since $\tilde{\gamma}_t$ is recovered from~\eqref{eq:slowstepsto} upon setting $q = \alpha$ and $\rho =1$.

The decreasing rates of the bounds in Theorems~\ref{thm:stocon} and~\ref{thm:stodualcon} are determined by the decay rates of the step size sequence. The faster $\gamma_t$ diminishes, the more rapidly the upper bound in Theorem~\ref{thm:stocon} vanishes.  However, the sequence in~\eqref{eq:slowstepsto} decreases at most as fast as ${2}/{(\alpha t+2) }$. To improve the bound in Theorem~\ref{thm:stocon}, a more rapidly vanishing sequence is proposed next. Specifically, consider the sequence 
\begin{equation} \label{eq:recursesto}
\boxed{
\gamma_0 = 1,~\text{and} ~\gamma_{t+1} = \frac{\sqrt{\alpha^2 \gamma_t^4+4\gamma_t^2} - \alpha\gamma_t^2}{2}, ~~\forall t\ge 0.}
\end{equation}
It is then possible to establish the following.
\begin{lemma} [Recursive step size]
\label{prop:faststepsto}
If $\{\gamma_t\}_{t=0,1,\ldots}$ is chosen as in~\eqref{eq:recursesto}, it then holds that 
\begin{subequations}\label{eq:storecstep}
	\begin{align}
		\frac{1}{\alpha t +1} \leq \gamma_{t} \leq \frac{2}{\alpha t+2}, &\quad \forall t\geq 0 \label{eq:storecstepa}\\
			\gamma_{t+1} \leq \gamma_t, &\quad \forall t\geq 0 \label{eq:storecstepb}.
	\end{align}
\end{subequations}
\end{lemma}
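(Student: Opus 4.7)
The plan is to work with the reciprocal variable $u_t := 1/\gamma_t$ throughout, because the recursion becomes polynomial there. Starting from~\eqref{eq:recursesto} and clearing the square root, one sees that $\gamma_{t+1}$ is the positive root of the quadratic $\gamma_{t+1}^2 + \alpha \gamma_t^2 \gamma_{t+1} - \gamma_t^2 = 0$, which is precisely the identity $\gamma_t^2(1-\alpha\gamma_{t+1}) = \gamma_{t+1}^2$, i.e., the tight version of~\eqref{eq:qualconb}. Dividing by $\gamma_t^2 \gamma_{t+1}^2$ and substituting $u_t = 1/\gamma_t$ yields the clean recursion
\begin{equation*}
u_{t+1}^2 - \alpha u_{t+1} - u_t^2 = 0, \qquad u_0 = 1,
\end{equation*}
so that $u_{t+1} = (\alpha + \sqrt{\alpha^2 + 4 u_t^2})/2$.

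Monotonicity~\eqref{eq:storecstepb} follows immediately from this recursion. Since $u_{t+1} > 0$ and $\alpha = B/N_b > 0$, the identity $u_{t+1}^2 = u_t^2 + \alpha u_{t+1}$ gives $u_{t+1}^2 > u_t^2$, whence $u_{t+1} > u_t$, i.e.\ $\gamma_{t+1} < \gamma_t$.

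For~\eqref{eq:storecstepa}, I will prove the equivalent two-sided bound
\begin{equation*}
(\alpha t + 2)/2 \;\leq\; u_t \;\leq\; \alpha t + 1
\end{equation*}
by induction on $t$. The base case $t=0$ is immediate from $u_0 = 1$ and $\alpha \in (0,1]$. For the inductive step, assume the bound holds at $t$. For the upper bound on $u_{t+1}$, I will show $\sqrt{\alpha^2 + 4 u_t^2} \leq 2\alpha(t+1) + 2 - \alpha$; squaring and using $u_t^2 \leq (\alpha t + 1)^2$ reduces this to $0 \leq 4\alpha(\alpha t + 1)$, which is true. For the lower bound on $u_{t+1}$, I will show $\sqrt{\alpha^2 + 4 u_t^2} \geq \alpha t + 2$; squaring and using $4 u_t^2 \geq (\alpha t + 2)^2$ makes the inequality trivial.

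The main obstacle is just ensuring the sign handling in the square-root inversion — in particular, verifying that the right-hand side $2\alpha(t+1) + 2 - \alpha$ used in the upper-bound squaring is non-negative (which holds since $\alpha \leq 1$ so $2-\alpha \geq 1 > 0$), so that the squaring step is an equivalence rather than a weakening. Once this is handled, the remainder is elementary polynomial algebra, and the combined induction plus the monotonicity argument above completes the proof of both parts of~\eqref{eq:storecstep}.
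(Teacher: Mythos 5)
Your proof is correct. The reduction to the reciprocal variable $u_t = 1/\gamma_t$ is valid: clearing the square root in~\eqref{eq:recursesto} does give $\gamma_{t+1}^2 + \alpha\gamma_t^2\gamma_{t+1} - \gamma_t^2 = 0$, hence $u_{t+1}^2 - \alpha u_{t+1} - u_t^2 = 0$, and I checked the two inductive squaring steps (the upper bound reduces to $0 \leq 4\alpha(\alpha t + 1)$ after cancellation, the lower bound to $\alpha^2 \geq 0$); both sign conditions for inverting the square root hold as you note. The only implicit point is that $\gamma_t > 0$ for all $t$ so that $u_t$ is defined, which follows by a one-line induction from $\sqrt{\alpha^2\gamma^4 + 4\gamma^2} > \alpha\gamma^2$ for $\gamma>0$. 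Your route differs from the paper's in a way that pays off. The paper works directly with $\gamma_t$ and the map $\hat{\varphi}(x) = \bigl(\sqrt{\alpha^2x^4+4x^2}-\alpha x^2\bigr)/2$: it computes $\hat{\varphi}'$ and verifies it is positive, uses this monotonicity to sandwich $\gamma_{t+1} = \hat{\varphi}(\gamma_t)$ between $\hat{\varphi}\bigl(1/(\alpha t+1)\bigr)$ and $\hat{\varphi}\bigl(2/(\alpha t+2)\bigr)$, and then bounds those two values by separate algebraic manipulations; monotone decrease of $\gamma_t$ is a second induction relying again on $\hat{\varphi}$ being increasing. Your substitution makes the recursion polynomial, so monotonicity drops out for free from $u_{t+1}^2 = u_t^2 + \alpha u_{t+1} > u_t^2$ with no calculus, and the two inductive bounds become transparent polynomial comparisons --- essentially the same inequalities the paper verifies, but organized so that the bookkeeping is lighter. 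The paper's version has the minor advantage of never leaving the original variable, which keeps the connection to the tight form of~\eqref{eq:qualconb} (which you also identify) visible throughout; otherwise your argument is the more elementary of the two.
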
 
\begin{proof}
	See part~\ref{app:E} of the Appendix.
\end{proof}
The upper bound in~\eqref{eq:storecstepa}  confirms that the step size  in~\eqref{eq:recursesto} vanishes at least as fast as ${2}/{(\alpha t+2)}$. To check whether~\eqref{eq:recursesto} meets~\eqref{eq:qualcon}, note that \eqref{eq:qualcona} follows from \eqref{eq:storecstepa}, whereas~\eqref{eq:recursesto} implies that~\eqref{eq:qualconb} holds with equality.  
Because~\eqref{eq:recursesto} satisfies \eqref{eq:qualcon} and \eqref{eq:storecstepb},  the following computational bounds for \eqref{eq:recursesto} can be derived by plugging  \eqref{eq:storecstepa} into Theorems~\ref{thm:stocon}~and~\ref{thm:stodualcon}.

\begin{corollary} \label{cor:recf}
For convex $f(\mathbf{x})$, the iterates $\{\mathbf{x}^t\}_{t=1,2,\ldots}$ of Algorithm~\ref{alg:bcfw} with step size as in~\eqref{eq:recursesto},  satisfy
\begin{equation}\label{eq:fastfcon}
\mathbb E \left[ h(\mathbf{x}^{t}) \right]\leq \frac{4(1-\alpha) h(\mathbf{x}^{0})}{(\alpha t +2-\alpha)^2} + \frac{2t\bar{C}_f^{B} }{(\alpha t +2-\alpha)^2 }
\end{equation}
 and
\begin{equation} \label{eq:fastgcon}
{g}_t \leq \frac{27(\alpha t +1)}{2\alpha^3 t^3}\left[{(t+1)\bar{C}_f^{B} + 2(1-\alpha)h(\x^0)}  \right].
\end{equation}
\end{corollary}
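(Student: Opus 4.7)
The plan is to derive both inequalities by specializing Theorems~\ref{thm:stocon} and~\ref{thm:stodualcon} to the recursion~\eqref{eq:recursesto}, using the envelope estimates from Lemma~\ref{prop:faststepsto}. The text just before the corollary already verifies that~\eqref{eq:recursesto} satisfies~\eqref{eq:qualcon} (with \eqref{eq:qualconb} actually holding with equality), and the monotonicity~\eqref{eq:storecstepb} is part of Lemma~\ref{prop:faststepsto}, so both theorems apply.

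The primal bound~\eqref{eq:fastfcon} follows from Theorem~\ref{thm:stocon} by direct substitution. Since $\gamma_0=1$, the coefficient $(1-\alpha\gamma_0)/\gamma_0^2$ in~\eqref{eq:storesult} collapses to $1-\alpha$, giving
\[
\mathbb{E}[h(\x^t)] \le (1-\alpha)\gamma_{t-1}^2\, h(\x^0) + \frac{t\bar{C}_f^B}{2}\, \gamma_{t-1}^2 .
\]
Squaring the envelope $\gamma_{t-1}\le 2/(\alpha(t-1)+2) = 2/(\alpha t+2-\alpha)$ from~\eqref{eq:storecstepa} and substituting yields~\eqref{eq:fastfcon}.

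For the dual bound~\eqref{eq:fastgcon}, I would start from~\eqref{eq:stodualcon} and use Lemma~\ref{prop:faststepsto} to bound the step sizes: $1/\gamma_t\le \alpha t+1$ in the denominator and $\gamma_K^2\le 4/(\alpha K+2)^2$ in the second term. Inserting~\eqref{eq:fastfcon} evaluated at iteration $K$ into the first term and using $\alpha\le 1$ to relax both $(\alpha K+2-\alpha)^2$ and $(\alpha K+2)^2$ downward to $(\alpha K)^2$ gives
\[
g_t \le \frac{\alpha t+1}{\alpha^3 K^2}\left[\frac{4(1-\alpha)h(\x^0)+2K\bar{C}_f^B}{t-K+1} + 2\bar{C}_f^B\right].
\]
A scaling argument with $K\approx ct$ reduces the right-hand side to the order of $(2\bar{C}_f^B/(\alpha^2 t))\bigl[1/(c(1-c))+1/c^2\bigr]$, and an elementary univariate minimization locates the optimum at $c^\ast=2/3$ with value $27/4$. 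Taking $K=\lceil 2t/3\rceil$ accordingly yields the integer-valid bounds $t-K+1\ge t/3$, $(\alpha K)^2\ge 4\alpha^2 t^2/9$, and $2K\le 4t/3+2$; collecting terms reproduces the prefactor $27(\alpha t+1)/(2\alpha^3 t^3)$ multiplying $(t+1)\bar{C}_f^B+2(1-\alpha)h(\x^0)$, as claimed.

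The conceptual content is fully supplied by the earlier theorems; the main obstacle is arithmetic, since the advertised constant $27/2$ is tight with respect to the relaxations used. In particular, the bounds on $K$ and on $t-K+1$ must be combined carefully before passing to the $1/t$ expansion, otherwise the constant drifts upward.
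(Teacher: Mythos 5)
Your proposal is correct and follows essentially the same route as the paper: both bounds come from specializing Theorems~\ref{thm:stocon} and~\ref{thm:stodualcon} with the envelopes of Lemma~\ref{prop:faststepsto} ($\gamma_0=1$, $\gamma_t\le 2/(\alpha t+2)$, $1/\gamma_t\le\alpha t+1$) and the choice $K=\lceil 2t/3\rceil$, whose origin is exactly the $27/4$ optimization you describe. The only cosmetic difference is that the paper reuses the intermediate inequality~\eqref{eq:tildeg} and the bound~\eqref{eq:K} already established in the proof of Corollary~\ref{cor:slowstep} (with $q=\alpha$, $\rho=1$), whereas you rederive the same combination directly from~\eqref{eq:stodualcon}; your arithmetic with $t-K+1\ge t/3$, $(\alpha K)^2\ge 4\alpha^2t^2/9$, and $2K\le 4t/3+2$ does reproduce the stated constant exactly.
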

\begin{proof}
	See part~\ref{app:F} of the Appendix.
\end{proof}

To recap, this section put forth two families of step sizes for Algorithm~\ref{alg:bcfw} with arbitrary $B$, namely \eqref{eq:slowstepsto} and \eqref{eq:recursesto}. Corollaries~\ref{cor:slowstep} and~\ref{cor:recf} establish convergence of Algorithm~\ref{alg:bcfw} for these step sizes, which also guarantee feasibility of the iterates since they satisfy~\eqref{eq:qualcona}. When $\{\gamma_t\}_{t=0,1,\ldots}$ is given by~\eqref{eq:slowstepsto} with $q=\alpha$ and $\rho =1$ or when it is defined as in ~\eqref{eq:recursesto},  the convergence rates of Algorithm~\ref{alg:bcfw} are in the order of $\mathcal{O}\left({1}/{t}\right)$, thus matching those of the traditional FW algorithm, yet the computational cost of the former is potentially much lower than that of the latter. 

\begin{remark}\label{remark:line-search}
The step size of RB-FW can also be chosen through line search, which prescribes 
\begin{equation} \label{eq:linesearchBCFW}
\gamma_t  = \arg \min_{0\leq \gamma \leq 1} f\left((1-\gamma)\mathbf{x}^t+\gamma \hat{\mathbf{s}}^t\right)
\end{equation}
with ${\hat{\s}^t}^\top := [{\hat{\s}_1^t}^\top , \ldots,   {\hat{\s}_{N_b}^t}^\top ]$ and
\begin{equation*}\label{eq:hats_n}
\hat{\s}_n^t :=\left\{ \begin{array}{ll}
\s_n^t ,		&n\in \mathcal{B}_t\\
\x_n^t ,		&n\in \mathcal{N}_b \setminus\mathcal{B}_t.\\
\end{array}\right. 
\end{equation*}
Let  $\{{\check{\x}^{t}}\}_{t=0,1, \ldots}$ be the iterates generated by Algorithm~\ref{alg:bcfw} with $\gamma_t$ given by~\eqref{eq:linesearchBCFW}.
By~\eqref{eq:dec} and~\eqref{eq:linesearchBCFW}, it holds that
\begin{equation}\label{eq:line}
\mathbb E\left[ h(\check{\x}^{t+1}) \right]\leq\mathbb E\left[ h(\check{\x}^t)\right] -\alpha \gamma_t \mathbb E\left[g(\check{\x}^t)\right]+{\gamma_t^2 \bar C_f^{B}}/{2} . 
\end{equation}
for any  predefined step-size sequence $\{\gamma_t \in [0, 1]\}$~\cite{lacoste2013block,stofw}. Particularly,  \eqref{eq:line} holds for $\{\gamma_t :=  2/ (\alpha t +2)\}_{t=0, 1,\ldots}$. It can then be shown that $\{{\check{\x}^{t}}\}_{t=0,1, \ldots}$ satisfy for $t\geq 1$
\begin{equation*}\label{eq:linesearchh}
\mathbb E \left[ h(\check{\mathbf{x}}^{t}) \right]\leq \frac{4(1-\alpha) h(\mathbf{x}^{0})}{(\alpha t +2-\alpha)^2} + \frac{2t\bar{C}_f^{B} }{(\alpha t +2-\alpha)^2 } 
\end{equation*}
and
\begin{equation*} \label{eq:linesearchg}
\check{g}_t \leq \frac{27(\alpha t +2)}{4\alpha^3}\cdot \frac{(t+1)\bar{C}_f^{B} + 2(1-\alpha)h(\x^0)}{t^3}
\end{equation*}
where $\check{g}_t: = \min_{k \in  \{0,1, \ldots t\}}  \mathbb E \left[g(\check{\x}^k)\right]$. The proof follows the steps of the one for Corollary~\ref{cor:slowstep}.
The convergence rate of line-search-based Algorithm~\ref{alg:bcfw} therefore remains in the order of $\mathcal{O}(1/t)$. Note however that extra computational cost is incurred for finding $\gamma_t$ via~\eqref{eq:linesearchBCFW}.

\end{remark}
\color{black}
\begin{remark}\label{remark:1}
At this point, it is worth discussing the choice of the step size leading to the fastest convergence in a given problem. Even though the  bounds in this section suggest that the more rapid the decrease of the step sizes, the quicker the decrease of $h(\x^t)$, this is not always the case in practice. This is because step sizes with large decay rates become small  after the first few iterations, and small  step sizes lead to slow changes in $h(\x^t)$. Conversely,  small decay rates tend to yield rapidly decreasing $h(\x^t)$ in the first few iterations since the step sizes remain relatively large. 
Hence, it is difficult to provide universal guidelines since rapidly or slowly diminishing step sizes may be preferred depending on the specific optimization problem at hand.
For example, if optimal solutions lie in the interior of the feasible set, rapidly diminishing step sizes can help reduce oscillations around optimal solutions, thus improving the overall convergence rates. On the other hand, if $f(\x)$ is monotone on $\mathcal{X}$, the solution lies on the boundary, which means that no oscillatory behavior is produced and, hence, slowly diminishing step sizes will be preferable. 
\end{remark}





 


\section{RB-FW for Nonconvex Programs}\label{sec:bcfwnoncvx} 
The objective function of~\eqref{eq:blockproblem} is nonconvex in certain applications, such as constrained multilinear decomposition~\cite{papalexakis2013k} and power system state estimation~\cite{SPM2013,globalsip2016wzgs}. Yet, convergence of RB-FW has never been investigated for this case. The rest of this section fills this gap by analyzing the convergence rate of RB-FW in problems involving a nonconvex objective. Similar to Sec.~\ref{sec:bcfw}, computational bounds are first derived for a wide class of step sizes, and are subsequently tailored for $\gamma_t$ as in~\eqref{eq:slowstepsto} as well as for  exact line search. 

Recall that Section~\ref{sec:prefw} introduced $g(\x)$ as a  non-stationarity measure of point $\x$ with respect to $f(\x)$. In the sequel, RB-FW with be analyzed in terms of upper bounds on $g_t$ [cf.~\eqref{eq:gt}]. 
\begin{theorem}\label{thm:dualgap}
	If $\{\gamma_t\}_{t=0,1,\ldots}$ satisfy $0 \leq \gamma_t \leq 1 ~\forall t$,  it holds for the iterates $\{\x^t\}_{t=0,1,\ldots}$ of Algorithm~\ref{alg:bcfw} that 
	\begin{equation} \label{eq:ncvxg}
		{g}_t \leq  \frac{h(\x ^0)}{\alpha\sum_{k = 0}^t \gamma_k } +\frac{\bar C_f^{B} \sum_{k = 0}^t {\gamma}^2_k}{2\alpha \sum_{k = 0}^t \gamma_k} ,\quad t\ge 0.  
	\end{equation}
\end{theorem}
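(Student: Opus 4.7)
The plan is to imitate the proof of Theorem~\ref{thm:stodualcon} but without invoking convexity of $f$. The key observation is that Lemma~\ref{lem:declem} holds for arbitrary (possibly nonconvex) differentiable $f$, so it can still serve as the starting point. What is lost in the nonconvex case is the inequality $\mathbb{E}[g(\x^t)]\ge \mathbb{E}[h(\x^t)]$; consequently, the proof of Theorem~\ref{thm:stodualcon} must be adapted so that it does not rely on this relation.

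First, I would rearrange~\eqref{eq:dec} into
\begin{equation*}
\alpha\gamma_k\,\mathbb{E}[g(\x^k)]\;\le\;\mathbb{E}[h(\x^k)]-\mathbb{E}[h(\x^{k+1})]+\tfrac{\bar C_f^{B}}{2}\gamma_k^2,
\end{equation*}
and sum this bound over $k=0,1,\ldots,t$. The right-hand side telescopes into $h(\x^0)-\mathbb{E}[h(\x^{t+1})]+\tfrac{\bar C_f^{B}}{2}\sum_{k=0}^t\gamma_k^2$, while the left-hand side is lower bounded by $\alpha g_t\sum_{k=0}^t\gamma_k$ because $g_t\le \mathbb{E}[g(\x^k)]$ for every $k\in\{0,\ldots,t\}$ by definition~\eqref{eq:gt}.

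Next, since $\x^*$ is a global minimizer of $f$ over $\mathcal{X}$, $h(\x^{t+1})=f(\x^{t+1})-f(\x^*)\ge 0$ for every iterate, hence $\mathbb{E}[h(\x^{t+1})]\ge 0$ can be dropped from the telescoped sum. This leaves
\begin{equation*}
\alpha g_t\sum_{k=0}^t\gamma_k\;\le\;h(\x^0)+\tfrac{\bar C_f^{B}}{2}\sum_{k=0}^t\gamma_k^2,
\end{equation*}
and dividing both sides by $\alpha\sum_{k=0}^t\gamma_k$ yields the desired bound~\eqref{eq:ncvxg}. Note that $\sum_{k=0}^t\gamma_k>0$ because the statement allows $\gamma_k=0$ but only vacuously; in the degenerate event that all $\gamma_k=0$ there is nothing to prove as $\x^t=\x^0$ and the inequality is trivial (or the right-hand side is understood as $+\infty$).

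The argument is essentially a telescoping-plus-nonnegativity calculation, so I do not foresee a genuine obstacle. The only subtlety worth flagging is that, unlike in Theorem~\ref{thm:stodualcon}, monotonicity of $\{\gamma_t\}$ is not needed here: because we telescope from $k=0$ rather than an arbitrary $K$, there is no need to lower bound $\sum_{k=K}^t\gamma_k$ by $(t-K+1)\gamma_t$. This explains why the step-size assumption in Theorem~\ref{thm:dualgap} is weaker than in the convex counterpart, which is important for later tailoring the bound to the decaying sequence~\eqref{eq:slowstepsto} and to exact line search in the nonconvex setting.
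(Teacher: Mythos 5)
Your proof is correct and follows essentially the same route as the paper's: both sum the per-iteration bound from Lemma~\ref{lem:declem} over $k=0,\ldots,t$, telescope, drop $\mathbb{E}[h(\x^{t+1})]\ge 0$, and divide by $\alpha\sum_{k=0}^t\gamma_k$. Your side remarks on why convexity and monotonicity of the step sizes are not needed here are accurate and consistent with the paper's treatment.
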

\begin{proof}
	Using $0\leq {g}_t \leq \mathbb{E}[ g(\x^k)] $  and \eqref{eq:dualdecrease}, we deduce that 
	\begin{align*}
		\alpha {g}_t \sum_{k = 0}^t \gamma_k \leq & \alpha \sum_{k = 0}^t \gamma_k \mathbb{E}[g(\x^k)]\nonumber\\
		\leq & \sum_{k = 0}^t \Big(\mathbb E\big[ h(\x^{k}) \big] -\mathbb E\big[ h(\x^{k+1})\big] \Big) +({\bar{C}_f^{B}}/{2})\sum_{k = 0}^t {\gamma}^2_k \nonumber \\
		=  & \mathbb E\left[ h(\x^{0}) \right] -\mathbb E\left[ h(\x^{t+1})\right] +({\bar{C}_f^{B}}/{2})\sum_{k = 0}^t {\gamma}^2_k \nonumber \\
		\leq &  h(\x ^0) +({\bar{C}_f^{B}}/{2}) \sum_{k = 0}^t {\gamma}^2_k 
	\end{align*}
	where the last inequality follows from $\mathbb E\left[ h(\x^{t+1})\right] \ge 0$. Dividing both sides by $\alpha\sum_{k = 0}^t \gamma_k$ leads to \eqref{eq:ncvxg}.
\end{proof} 

Clearly, Theorem~\ref{thm:dualgap} affirms that $\lim_{t\rightarrow \infty}{g}_t =0$ if the step-size sequence $\{\gamma_t\}_{t=0,1,\ldots}$ satisfies
\begin{equation*}
	\lim_{t\rightarrow \infty}\sum_{k=0}^{t} \gamma_k = \infty, ~~{\rm and}~~ \lim_{t\rightarrow \infty}\sum_{k=0}^{t} \gamma_k^2 = S
\end{equation*}
for some finite $S>0$.
In other words, if $\{\gamma_t\}_{t=0,1,\ldots}$ is not summable and $\{\gamma_t^2\}_{t=0,1,\ldots}$ is summable, then either $\x_t$ is a stationary point for some $t$, or, a subsequence of $\{\x_t\}_{t=0,1,\ldots}$ converges to a stationary point. 

For any given step size, the convergence rates of RB-FW can be derived through \eqref{eq:ncvxg}. To start with, consider $\{\gamma_t\}_{t=0,1,\ldots}$ in~\eqref{eq:slowstepsto} with $q= \alpha,~\rho=1$; that is, $\gamma_t = {2}/{(\alpha t +2)}$, and note~that
\begin{subequations}\label{eq:ncvx}
	\begin{align}
&	\sum_{k=0 }^{t} \frac{2}{\alpha k+2} \geq \int_{x=0}^{t} \frac{2}{\alpha x+2} dx  = \frac{2}{\alpha}\log\Big (\frac{\alpha t+2}{2} \Big) \label{eq:ncvxub} \\
&	\sum_{k=0 }^{t} \frac{4}{(\alpha k+2)^2} \leq \int_{x=-1}^{t} \frac{4}{(\alpha x+2)^2} dx = \frac{4}{\alpha}\Big(\frac{1}{2-\alpha} - \frac{1}{\alpha t+2}\Big) \label{eq:ncvxlb}.
    \end{align}
\end{subequations}    
By substituting \eqref{eq:ncvx} into Theorem~\ref{thm:dualgap}, it follows that Algorithm~\ref{alg:bcfw} attains a stationary point of a nonconvex program at rate $\mathcal{O} (1/\log t)$.   

This rather slow rate can be substantially improved upon adopting  exact line search for RB-FW.
\begin{theorem}\label{prop:optdualgap}
	If $\{\gamma_t\}_{t=0,1,\ldots}$ is chosen as in~\eqref{eq:linesearchBCFW}, it holds for the iterates $\{\x^t\}_{t=0,1,\ldots}$ of Algorithm~\ref{alg:bcfw} that 
	\textcolor{black}{
	\begin{equation} \label{eq:noncvx}
		{g}_t\leq \frac{\max \left\{2h(\x^0),\bar{C}_f^{B}\right \}}{\alpha\sqrt{t+1}},\quad t\ge 0.
	\end{equation}}
\end{theorem}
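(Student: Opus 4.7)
The plan is to exploit the key property of line search noted in Remark~\ref{remark:line-search}: since $\gamma_t$ is chosen to minimize $f$ over the segment, the resulting iterate $\check{\x}^{t+1}$ is at least as good as what would be produced by \emph{any} predefined step size in $[0,1]$. In particular, the descent inequality of Lemma~\ref{lem:declem} holds for the line-search iterates with any auxiliary sequence $\{\gamma_t\}$ chosen purely for the analysis. This gives us the freedom to pick a step size tailored to the $\sqrt{t+1}$ rate we are trying to prove.

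First I would fix the target iteration index $t$ and introduce, for analysis purposes only, the constant sequence $\gamma_k = 1/\sqrt{t+1}$ for $k=0,1,\ldots,t$, which lies in $(0,1]$. By Remark~\ref{remark:line-search} (specifically~\eqref{eq:line}), the line-search iterates satisfy
\begin{equation*}
\alpha \gamma_k \,\mathbb{E}[g(\check{\x}^k)] \leq \mathbb{E}[h(\check{\x}^k)] - \mathbb{E}[h(\check{\x}^{k+1})] + \tfrac{1}{2}\gamma_k^2 \bar{C}_f^{B}.
\end{equation*}
Telescoping this inequality over $k=0,1,\ldots,t$ (exactly as in the proof of Theorem~\ref{thm:dualgap}) and using $\mathbb{E}[h(\check{\x}^{t+1})]\ge 0$ yields
\begin{equation*}
\alpha \sum_{k=0}^t \gamma_k\, \mathbb{E}[g(\check{\x}^k)] \leq h(\x^0) + \tfrac{1}{2}\bar{C}_f^{B}\sum_{k=0}^t \gamma_k^2.
\end{equation*}

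Next I would invoke the definition $g_t \leq \mathbb{E}[g(\check{\x}^k)]$ for every $k\le t$ to replace the weighted sum on the left by $\alpha g_t \sum_{k=0}^t \gamma_k$. With the specific choice $\gamma_k = 1/\sqrt{t+1}$, the sums collapse neatly: $\sum_{k=0}^t \gamma_k = \sqrt{t+1}$ and $\sum_{k=0}^t \gamma_k^2 = 1$. Substituting gives
\begin{equation*}
\alpha \sqrt{t+1}\, g_t \leq h(\x^0) + \tfrac{1}{2}\bar{C}_f^{B}.
\end{equation*}
Finally, a short case analysis establishes $h(\x^0) + \tfrac{1}{2}\bar{C}_f^{B} \leq \max\{2h(\x^0),\bar{C}_f^{B}\}$ (check both cases $2h(\x^0)\geq \bar{C}_f^{B}$ and $2h(\x^0)<\bar{C}_f^{B}$), which yields the claimed bound~\eqref{eq:noncvx}.

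The only subtle point, which could be mistaken for an obstacle, is that the analysis sequence $\gamma_k = 1/\sqrt{t+1}$ depends on the horizon $t$ and is \emph{not} the step size actually used by the algorithm. This is legitimate precisely because of the exact line-search property: the algorithm's line-search choice dominates any fixed candidate step, so any predefined $\{\gamma_k\}\subset [0,1]$ supplies a valid upper bound on $\mathbb{E}[h(\check{\x}^{k+1})]$ through~\eqref{eq:line}. Beyond that observation, the argument is routine telescoping and does not require convexity of $f$, so the same derivation applies in the nonconvex setting of this section.
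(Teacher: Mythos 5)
Your proof is correct, and it reaches \eqref{eq:noncvx} by a genuinely different route than the paper. The paper's proof also rests on the domination property of exact line search, but instead of a fixed comparison step it minimizes the right-hand side of \eqref{eq:dec} over $\gamma\in[0,1]$ separately at each $k$, obtaining the adaptive comparison step $\hat\gamma_k=\min\{1,\,\alpha\mathbb{E}[g(\x^k)]/\bar{C}_f^{B}\}$; a case analysis on whether this clips at $1$ gives the per-iteration decrease $\mathbb{E}[h(\x^{k+1})]\le\mathbb{E}[h(\x^{k})]-\min\{\alpha\mathbb{E}[g(\x^k)]/2,\ \alpha^2\mathbb{E}[g(\x^k)]^2/(2\bar{C}_f^{B})\}$, which is then telescoped and inverted (solving a quadratic inequality in $g_t$) to get \eqref{eq:boundgt}. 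You replace the adaptive comparison step by the horizon-tuned constant $\gamma_k=1/\sqrt{t+1}$, which collapses the sums to $\sqrt{t+1}$ and $1$ and avoids both the case analysis on $\mathbb{E}[g(\x^k)]$ and the quadratic inversion; the price is a somewhat weaker intermediate bound, $\bigl(h(\x^0)+\bar{C}_f^{B}/2\bigr)/(\alpha\sqrt{t+1})$ versus the paper's $\max\bigl\{2h(\x^0)/(\alpha(t+1)),\ \sqrt{2\bar{C}_f^{B}h(\x^0)}/(\alpha\sqrt{t+1})\bigr\}$, which by the AM--GM inequality is smaller (notably when $h(\x^0)$ is small), though both are absorbed into the stated $\max\{2h(\x^0),\bar{C}_f^{B}\}/(\alpha\sqrt{t+1})$. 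The two observations you flag as the crux are indeed the load-bearing ones and are both sound: \eqref{eq:line} holds for the line-search iterates with \emph{any} analysis sequence in $[0,1]$, including one that depends on the target horizon $t$, because the analysis sequence never influences the actual iterates; and nothing in the telescoping uses convexity, since Lemma~\ref{lem:declem} and the domination property are convexity-free.
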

\begin{proof}
	The right-hand side of \eqref{eq:dec} is minimized for
	\begin{align}
		\hat{\gamma}_k &= \arg\min_{\gamma \in [0,1]}  \mathbb E\left[ h(\x^{k}) \right] -\alpha \gamma \mathbb E\left[g(\x^k)\right]+{\gamma^2 \bar{C}_f^{B}}/{2} \nonumber\\ 
		&= \min\big\{ 1, {\alpha\mathbb E\left[g(\x^k)\right]}/{\bar{C}_f^{B}} \big\}.  \label{eq:betterstep}
	\end{align}
	
Thus, if $\mathbb E\left[ g(\x^{k}) \right] \geq {\bar{C}_f^{B}}/{\alpha}$, then $\hat{\gamma}_k=1$ and \eqref{eq:dec} becomes 
	\begin{align}
		\mathbb E\left[ h(\x^{k+1}) \right] & \leq \mathbb E\left[ h(\x^{k}) \right] -\alpha \mathbb E\left[ g(\x^{k}) \right] +\bar{C}_f^{B}/{2} \nonumber \\
		&\leq  \mathbb E\left[ h(\x^{k}) \right] -\alpha \mathbb E\left[ g(\x^{k}) \right] /2 \label{eq: R1gammahat1}
	\end{align}
	where the second inequality follows from $  {\bar{C}_f^{B}} \leq \alpha \mathbb E\left[ g(\x^{k}) \right] $.	
	Similarly, if $\mathbb E\left[ g(\x^{k}) \right] < {\bar{C}_f^{B}}/{\alpha}$, then $\hat{\gamma}_k={\alpha\mathbb E\left[g(\x^k)\right]}/{\bar{C}_f^{B}}$ and \eqref{eq:dec} becomes 
	\begin{equation} \label{eq: R1gammahat2}
	 \mathbb E\left[ h(\x^{k+1}) \right] \leq  \mathbb E\left[ h(\x^{k}) \right] -{\alpha^2  \mathbb E\left[g(\x^k)\right]^2}/2{\bar{C}_f^{B}}.
	\end{equation}    
    Combining both cases, \eqref{eq: R1gammahat1} and~\eqref{eq: R1gammahat2} establish that  
	\begin{equation}
	\mathbb E\left[ h(\x^{k+1}) \right] \leq  \mathbb E\left[ h(\x^{k}) \right] - \min \left\{ \frac{\alpha \mathbb E\left[ g(\x^{k}) \right] }{2}, \frac{\alpha^2  \mathbb{E}^2\left[g(\x^k)\right]}{2\bar{C}_f^{B}}\right\}. \label{eq:ncvxopt}
	\end{equation}
	 When $\gamma_k$ is given by~\eqref{eq:linesearchBCFW} with $t=k$, $h(\x^{k+1})$ is not greater than 
	when $\gamma_k = \hat{\gamma}_k$. Therefore,~\eqref{eq:ncvxopt} still holds in the former case. Thus, for $\{\gamma_k\}_{k=0,1,\ldots}$ as in~\eqref{eq:linesearchBCFW}, it follows that 
		\begin{equation}
\min \left\{ \frac{\alpha \mathbb E\left[ g(\x^{k}) \right] }{2}, \frac{\alpha^2  \mathbb E\left[g(\x^k)\right]^2}{2\bar{C}_f^{B}}\right\} \leq 	  \mathbb E\left[ h(\x^{k}) \right] -\mathbb E\left[ h(\x^{k+1}) \right] . \label{eq:ncvxopteq}
	\end{equation}
	Summing \eqref{eq:ncvxopteq} from $k = 0$ to $t$ yields
		\begin{equation}
		 (t+1)\min \left\{ \frac{\alpha g_t }{2}, \frac{\alpha^2  g_t^2}{2\bar{C}_f^{B}}\right\} \leq   h(\x^{0})- \mathbb E\left[ h(\x^{t+1}) \right]  \label{eq:ncvxsum}.
		\end{equation}
Therefore, 	
\begin{equation}\label{eq:boundgt}
	g_t \leq \max \left\{\frac{2h(\x^{0}) }{\alpha (t+1)}, \frac{\sqrt{2\bar{C}_f^{B} h(\x^0)}}{\alpha\sqrt{t+1}}  \right\}.
\end{equation}
Since $t+1 \geq \sqrt{t+1}$	and $\sqrt{2\bar{C}_f^{B} h(\x^0)} \leq  \max\{2h(\x^0), \bar{C}_f^{B}\}$, \eqref{eq:noncvx} holds.
\end{proof}

\color{black}
Theorem~\ref{prop:optdualgap} generalizes the recent result in~\cite{lacoste2016convergence}, which only applies to the classical FW method. 
The improved bound in~\eqref{eq:noncvx} is attained at the price of performing exact line search, which requires the solution to a potentially nonconvex univariate optimization subproblem~\eqref{eq:linesearchBCFW}. It is worth mentioning that an optimal solution to this subproblem can be readily found in a number of cases. For example, if  $f((1-\gamma)\x^t + \gamma \s^t )$ is quadratic in $\gamma$, then $\gamma_t$ can be readily found by evaluating this function at three~points.

All in all, the main contribution here is a convergence rate analysis of RB-FW for minimizing~\eqref{eq:blockproblem} with nonconvex $f(\x)$. Interestingly, when RB-FW relies on step sizes obtained through line search, a stationary point is reached with rate~$\mathcal{O}{(1/\sqrt{t})}$.

\section{Generalized Step Sizes for FW}\label{sec:fw}

The availability of satisfactory step sizes for FW is rather limited. Indeed, besides line search, convergence rate of FW has only been established for $\gamma_t=\frac{2}{t+2}$~\cite{jaggi2013revisiting}, and $\gamma_t=\frac{1}{t+1}$~\cite{freund2016new}. This limits the user's control on convergence of FW iterates; cf. Remark~\ref{remark:1}. To alleviate this limitation, this section examines the usage of step sizes in~\eqref{eq:slowstepsto} and~\eqref{eq:recursesto} in the classical FW solver, namely Algorithm~\ref{alg:fw}. Since the latter can be viewed as a special case of Algorithm~\ref{alg:bcfw} with $B = N_b$, Corollaries~\ref{cor:slowstep} and~\ref{cor:recf} can be leveraged to derive the convergence rates of FW for convex programs with the novel step sizes. Specifically, the ensuing computational bounds hold.

\begin{corollary} \label{prop:fgconv}
If \textcolor{black}{$f(\x)$ is convex} and the step size sequence $\{\gamma_t\}_{t=0,1,\ldots}$ is chosen as in~\eqref{eq:slowstepsto} with $\alpha =1$,  $q\in (0, 1]$ and $ \rho \in (0.5, 1]$, then the successive iterates $\{\mathbf{x}^t\}_{t=1, 2, \ldots}$ of Algorithm~\ref{alg:fw}  satisfy for $t \geq 1$
	\begin{equation}\label{eq:fcon}
	h(\mathbf{x}^{t}) \leq  \frac{2tC_f}{\left[q(t-1)^\rho +2\right]^2} 
	\end{equation}
	and
	\begin{equation} \label{eq:gcon}
	{g}_t \leq \frac{(2\rho+1)^{2\rho+1}(qt^\rho +2) }{ q^2(2\rho)^{2\rho}}\frac{(t+1)C_f }{t^{2\rho+1}} \;.
	\end{equation}
\end{corollary}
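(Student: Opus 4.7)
The plan is to obtain both bounds as a direct specialization of Corollary~\ref{cor:slowstep} to the full-block regime, in which Algorithm~\ref{alg:fw} coincides with Algorithm~\ref{alg:bcfw}. First, I would note that the classical FW solver is recovered from RB-FW by taking the trivial partition $B = N_b$, which forces $\alpha := B/N_b = 1$ and renders $\mathcal{B}_t = \mathcal{N}_b$ deterministically. Consequently, the iterates $\{\x^t\}$ are non-random and every expectation appearing in Corollary~\ref{cor:slowstep} collapses to the enclosed quantity; in the same regime, the remark following the definition of $\bar{C}_f^{B}$ gives $\bar{C}_f^{B} = C_f$.

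Next, I would verify that the hypotheses of Corollary~\ref{cor:slowstep} remain in force once $\alpha$ is set to $1$. Since the assumptions on $(q,\rho)$ in the statement read $q \in (0,1] = (0,\alpha]$ and $\rho \in (0.5,1]$, the step size~\eqref{eq:slowstepsto} falls exactly within the scope of Lemma~\ref{prop:slowstepsto}, so condition~\eqref{eq:qualconb} is satisfied; the bound~\eqref{eq:qualcona} is immediate from the form of $\gamma_t$, and the sequence is plainly non-increasing, as additionally required by Theorem~\ref{thm:stodualcon} (which underlies the duality-gap half of Corollary~\ref{cor:slowstep}).

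Finally, I would substitute $\alpha = 1$ and $\bar{C}_f^{B} = C_f$ into~\eqref{eq:stoprimalcon} and~\eqref{eq:stodual}. The factor $(1-\alpha)$ annihilates both the $4(1-\alpha)h(\x^0)$ summand in the primal bound and the $2(1-\alpha)h(\x^0)$ summand inside the dual bound, while $\bar{C}_f^{B}$ is replaced throughout by $C_f$; what remains is precisely~\eqref{eq:fcon} and~\eqref{eq:gcon}. Because the argument is purely a bookkeeping specialization of a previously established corollary, I do not foresee any genuine obstacle---the only care needed is in tracking the hypotheses on $(q,\rho)$ through the specialization $\alpha = 1$ and confirming that the collapse of the randomness preserves the claimed deterministic inequalities.
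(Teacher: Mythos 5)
Your proposal is correct and coincides with the paper's own proof, which likewise obtains Corollary~\ref{prop:fgconv} by specializing Corollary~\ref{cor:slowstep} to $\alpha=1$ (so that $\bar{C}_f^{B}=C_f$, the expectations become deterministic, and the $(1-\alpha)h(\x^0)$ terms vanish). The paper states this in one line; your version simply spells out the same bookkeeping.
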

\begin{proof}
	This is a special case of Corollary \ref{cor:slowstep} for $\alpha = 1$.
\end{proof}

\begin{corollary} \label{prop:recconv}
	If \textcolor{black}{$f(\x)$ is convex} and the step-size sequence $\{\gamma_t\}_{t=0,1,\ldots}$ is chosen as in~\eqref{eq:recursesto} with $\alpha = 1$,
	then the successive iterates $\{\mathbf{x}^t\}_{t=1, 2, \ldots}$ of Algorithm~\ref{alg:fw}  satisfy for $t \geq 1$
	\textcolor{black}{
	\begin{equation}\label{eq:primalcon}
	h(\mathbf{x}^{t}) \leq  \frac{2C_f}{t+2} 
	\end{equation}
	and
	\begin{equation} \label{eq:pdconv}
	{g}_t \leq \frac{27 C_f}{2}\left(\frac{1}{t}+\frac{2}{t^2}+ \frac{1}{t^3}  \right).
	\end{equation}}
\end{corollary}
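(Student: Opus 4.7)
The plan is to derive Corollary~\ref{prop:recconv} as a direct specialization of Corollary~\ref{cor:recf}, since Algorithm~\ref{alg:fw} is recovered from Algorithm~\ref{alg:bcfw} by setting $B=N_b$, which forces $\mathcal{B}_t=\mathcal{N}_b$ for all $t$ and hence $\alpha=B/N_b=1$ together with $\bar{C}_f^{B}=C_f$. With $\{\gamma_t\}$ defined by~\eqref{eq:recursesto} and $\alpha=1$, the hypotheses of Corollary~\ref{cor:recf} are satisfied, so both of its bounds can be invoked verbatim.

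For the primal bound~\eqref{eq:primalcon}, I would substitute $\alpha=1$ and $\bar{C}_f^{B}=C_f$ into~\eqref{eq:fastfcon}. The term $4(1-\alpha)h(\x^0)/(\alpha t+2-\alpha)^2$ vanishes, leaving
\begin{equation*}
h(\x^t)\le\frac{2tC_f}{(t+1)^2}.
\end{equation*}
I would then sharpen this to~\eqref{eq:primalcon} by noting that $t(t+2)\le(t+1)^2$, which yields $\tfrac{2t}{(t+1)^2}\le\tfrac{2}{t+2}$.

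For the primal-dual bound~\eqref{eq:pdconv}, I would similarly specialize~\eqref{eq:fastgcon} at $\alpha=1$ and $\bar{C}_f^{B}=C_f$, obtaining
\begin{equation*}
g_t\le\frac{27(t+1)}{2t^3}\,(t+1)C_f=\frac{27C_f}{2}\cdot\frac{(t+1)^2}{t^3}.
\end{equation*}
Expanding $(t+1)^2=t^2+2t+1$ and dividing through by $t^3$ gives the three-term form $\tfrac{1}{t}+\tfrac{2}{t^2}+\tfrac{1}{t^3}$, which matches~\eqref{eq:pdconv}.

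Because the whole argument is a substitution into an already-established corollary, I do not anticipate any real obstacle; the only points requiring care are the algebraic simplification $t(t+2)\le(t+1)^2$ used to pass from $\tfrac{2tC_f}{(t+1)^2}$ to $\tfrac{2C_f}{t+2}$, and the expansion of $(t+1)^2/t^3$ in the duality-gap bound. No additional convexity or curvature argument beyond what is already encoded in Corollary~\ref{cor:recf} is needed.
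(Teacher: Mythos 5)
Your proposal is correct and follows exactly the paper's route: the paper likewise obtains Corollary~\ref{prop:recconv} by specializing Corollary~\ref{cor:recf} to $\alpha=1$ and $\bar{C}_f^{B}=C_f$. The two algebraic steps you spell out---using $t(t+2)\le(t+1)^2$ to pass from $\tfrac{2tC_f}{(t+1)^2}$ to $\tfrac{2C_f}{t+2}$, and expanding $(t+1)^2/t^3$---are the details the paper leaves implicit, and both are valid.
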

\begin{proof}
	Corollary \ref{prop:recconv} follows directly from Corollary~\ref{cor:recf}.
\end{proof}

Corollaries \ref{prop:fgconv} and \ref{prop:recconv} establish convergence rates in terms of both $h(\x^t)$ and $g_t$ for the classical FW method with step sizes of different decay rates.
For a given problem, the most suitable step size can be selected following the guidelines in Remark~\ref{remark:1}. 
Interestingly,
comparing Corollaries~\ref{prop:fgconv} and \ref{prop:recconv} with  Corollaries~\ref{cor:slowstep} and~\ref{cor:recf} reveals that the initial optimality gap $h(\x^0)$ no longer affects the bounds for FW.

\section{Applications}\label{sec:app}  
Two applications where RB-FW exhibits  significant computational advantages over existing alternatives  will be delineated in this section.
\subsection{Coordination of EV charging}\label{sec:ev}
The convex setup of optimal schedules for EV charging in~\cite{liang16frank} is briefly reviewed next.
Suppose that a load aggregator coordinates the charging of $N$ EVs over the $T$ consecutive time slots  $\mathcal{T}:=\{1,\ldots, T\}$ of length $\Delta_\tau$. Let $\mathcal{T}_n\subseteq \mathcal{T}$ denote the time slots in which vehicle $n$ is connected to the power grid,  and let ${p}_n(\tau)$ be the charging rate of EV $n$ at time $\tau$ to be scheduled by the load aggregator. If $\bar{p}_n$ is the charging rate limitation imposed by the battery of vehicle $n$, then ${p}_n(\tau)$ should lie in the interval $[0, \bar{p}_n(\tau)]$
with
\begin{equation*}
	\bar{p}_n(\tau):=\left\{
	\begin{array}{ll}
		\bar{p}_n,&\tau\in \mathcal{T}_n,\\
		0 ,&\text{otherwise}.
	\end{array} \right.
\end{equation*}
  The charging profile for vehicle $n$, denoted by~$\mathbf{p}_n^{\top}:=[p_n(1), \cdots , p_n(T)]$, should therefore belong to the convex and compact set
\begin{equation*}
	\mathcal{P}_n:=\left\{\mathbf{p}_n: \Delta_\tau \mathbf{p}_n^\top \mathbf{1}=R_n,~0 \leq p_n(\tau) \leq \bar{p}_n(\tau),~\forall \tau\in\mathcal{T}\right\}
\end{equation*}
where  $R_n$  represents the total energy needed by EV $n$. 

Given $\{R_n\}_{n=1}^N,~\{\bar{p}_n\}_{n=1}^N$, and $\{\mathcal{T}_n\}_{n=1}^N$, the problem solved by the aggregator is to find the charging profiles minimizing its electricity cost~\cite{liang16frank}; that is, 
\begin{align}\label{eq:EVS2}
	\mathbf{p}^*\in \arg\min_{\mathbf{p}} ~&~  f(\mathbf{p})\\
	\subjectto~&~ \mathbf{p}_n\in{\mathcal{P}}_n,~\forall~n\in\mathcal{N} \nonumber
\end{align}
where $\mathbf{p}^\top:=[\mathbf{p}_1^{\top},\cdots,\mathbf{p}_{N}^{\top}]$ and $\mathcal{N}:=\{1, \ldots, N\}$.  With $\{D(\tau)\}_{\tau=1}^T$ denoting additional known loads, the total cost $f(\p)$ is
\begin{equation}\label{eq:cost}
	f({\mathbf{p}})=\sum_{\tau=1}^{T} \Big(D(\tau)+\sum_{n=1}^{N}p_n(\tau)\Big)^2\;.
\end{equation}
Note that $f(\p)$ is convex but \emph{not strongly convex} in $\p$. The feasible set for \eqref{eq:EVS2} is the Cartesian product $\mathcal{P}:=\mathcal{P}_1\times\ldots \times \mathcal{P}_{N}$, which is convex and compact. Thus, problem \eqref{eq:EVS2} is convex and of the form~\eqref{eq:blockproblem}. 

Assuming that the aggregator can only afford updating the charging profiles of $B$ out of the $N$ vehicles in parallel due to a limited number of processors, 
the ensuing $B$ linear subproblems arise when solving~\eqref{eq:EVS2} via Algorithm \ref{alg:bcfw}: 
\begin{align}\label{eq:lp3}
\mathbf{s}_n^t\in\arg\min_{\mathbf{s}_n\in \mathcal{P}_n}\langle\s_n, \c^t \rangle,  \quad   n\in \mathcal{B}_t
\end{align}
where $|\mathcal{B}_t|=B$ and  $\c^t := \nabla_{\mathbf{p}_n} f(\p^t)$. The latter does not depend on $n$ since the gradient $\nabla_{\mathbf{p}_n} f(\p^t)$ is identical across the $N$ vehicles. Its $\tau$-th entry is given by 
\begin{equation}\label{eq:c}
c^t(\tau):=2\big(D(\tau)+\sum_{n=1}^N p_n^t(\tau)\big).
\end{equation}

The subproblem \eqref{eq:lp3} can be solved in closed form~\cite{BoVa04}. To find a solution, sort the entries of $\mathbf{c}^t$ in non-decreasing order by finding $\{\tau_i^t\}_{i=1}^{T}$ such that $c^t(\tau_1^t)\leq c^t(\tau_2^t)\leq \ldots \leq c^t(\tau_T^t)$. Subsequently, one needs to find the index $\bar{\tau}_n^t \geq 1$ for which
\begin{equation}\label{eq:opts0}
\sum_{i=1}^{\bar{\tau}_n^t-1}\bar{p}_n(\tau_i^t)\leq R_n~\text{and}~\sum_{i=1}^{\bar{\tau}_n^t}\bar{p}_n(\tau_i^t)> R_n.
\end{equation}
Finally, the entries of the minimizer $\mathbf{s}_n^t$ are found as
\begin{equation}\label{eq:opts}
s_n^{t}(\tau_i^t)=\left\{
\begin{array}{ll}
\bar{p}_n(\tau_i^t),&i=1,\ldots,\bar{\tau}_n^t-1\\
R_n-\sum_{j=1}^{\bar{\tau}_n^t}\bar{p}_n(\tau_j^t),&i=\bar{\tau}_n^t\\
0,&i=\bar{\tau}_n^t+1,\ldots,T.
\end{array}\right.
\end{equation} The  computational advantage of RB-FW  for solving~\eqref{eq:EVS2} stems from the fact that the solution to the  subproblems~\eqref{eq:lp3} can be  obtained efficiently via \eqref{eq:opts} \textcolor{black}{upon receiving the $\mathbf{c}_t$ entry order, whereas competing alternatives require projections onto $\{\mathcal{P}_n\}_{n\in \mathcal{B}_t}$ per iteration~\cite{liang16scalable}}. Our RB-FW-based charging scheme is summarized in Algorithm~\ref{alg:docsev}.
\begin{algorithm}[t]
\caption{EV charging coordination solver}\label{alg:docsev}
\begin{algorithmic}[1]
\renewcommand{\algorithmicrequire}{\textbf{Input:}}
\renewcommand{\algorithmicensure}{\textbf{Output:}} 
\Require  $\{R_n\}_{n=1}^N,~\{\bar{p}_n\}_{n=1}^N$,  $\{\mathcal{T}_n\}_{n=1}^N$, and  $B$
\State Initialize $\{\p_n^0\}$ and $t=0$
\While{stopping\_criterion not met} 
\State Randomly pick $\mathcal{B}_t\subseteq \mathcal{N}$ such that $|\mathcal{B}_t|= B$
\State Evaluate $\c^t$ via~\eqref{eq:c} \textcolor{black}{and broadcast $\c^t$ entry order} 
\State Calculate $\{\s_n^t\}_{n \in \mathcal{B}_t}$ via \eqref{eq:opts0} and \eqref{eq:opts} 
\State Update $\{\p_n^{t+1}\}_{n\in \mathcal{N}}$ via 
		\begin{equation*}
		\quad 	\quad 	\quad	\quad	\p_n^{t+1}=\left\{
		\begin{array}{ll}
		 (1-\gamma_t)\p_n^{t}+\gamma_t\s_n^{t},& ~\forall n \in \mathcal{B}_t\\
		\mathbf{p}_n^t ,&~\forall n \in \mathcal{N} \setminus \mathcal{B}_t
		\end{array} \right.
		\end{equation*}
\State $t \leftarrow t+1$
\EndWhile
\end{algorithmic}
\end{algorithm} 

\subsection{Structural SVMs}\label{subsec:mkr}
The term \emph{structured prediction} comprises a family of machine learning problems, where the output to the \emph{predictors} have variable sizes~\cite{bakir2007predicting}. An example is the optical character recognition (OCR) task, where one is given a vector $z\in \mathbb{R}^P$ containing the $P$-pixel image of an $M$-letter word. The goal is to produce a vector $\y \in \{1,\ldots, 26\}^M$, whose $m$-th entry indicates which of the 26 letters of the alphabet corresponds to the $m$-th character in that word. This problem is challenging because the output $\y$ may take $26^M$ values, and also the same predictor must work for different values of $M$.

 
Structural SVMs have been widely adopted to carry out the aforementioned structured prediction tasks~\cite{taskar2003max},~\cite{tsochantaridis2005large}. Upon defining the application-dependent feature map $\bm{\phi}$~\cite{tsochantaridis2005large} that encodes  the relevant information for the input-output pair $(\z,\y)$ in the $d$-dimensional vector $\bm{\phi}(\z,\y)$, a vector $\w$  is learned so that  $\langle \w , \bm{\phi}(\z, \y) \rangle$ when seen as a function of $\y$ is maximized at the correct $\y$ for a given input  $\z$. Given $N$ training pairs $\{(\z_n, \y_n)\}_{n = 1}^N$, $\w$ is learned by solving
\begin{subequations}\label{eq:ssvmprimal} 
\begin{align} 
\minimize_{\w, \bm{\xi} } ~&~  \frac{\lambda}{2} \| \w \|^2 + \frac{1}{N} \sum_{n=1}^{N} \xi_n \label{eq:ssvmprimala}\\
\subjectto ~&~ \langle \w, \bm \psi_n(\tilde{\y}) \rangle \geq L_n(\tilde{\y}) -\xi_n  \label{eq:ssvmconst}
\\~&~~ \forall \tilde{\y} \in \mathcal{Y}_n,~ \forall n\in \mathcal{N} \nonumber
\end{align} 
\end{subequations}
where $\mathcal{N} : = \{1, \ldots, N\}$,  $\bm \psi_n (\tilde{\y} ) := \bm{\phi}(\z_n, \y_n) -\bm{\phi}(\z_n, \tilde{\y}) $, $L_n(\tilde{\y})$ is the incurred loss by predicting $\tilde{\y}$ instead of the given label $\y_n$, $\{\xi_n\}_{n=1}^N$ are slack variables, $\lambda$ is a nonnegative~constant, and $\mathcal{Y}_n$ is the set of all possible outputs for input $\z_n$. In the OCR example, $\mathcal{Y}_n = \{1,\ldots,26\}^{M_n}$, where $M_n$ is the number of characters of the $n$-th word.

Problem~\eqref{eq:ssvmprimal} is difficult since the number of constraints explodes with $|\mathcal{Y}_n|$. If ${\beta}_n(\tilde{\y})$ is the Lagrange dual variable associated with~\eqref{eq:ssvmconst}, vector $\bm{\beta}_n$ is formed with entries $\{\beta_n(\tilde{\y})\}_{\tilde{\y} \in \mathcal{Y}_n}$, and vector $\bm{\beta}$ has entries $\{\bm{\beta}_n\}_{ n\in \mathcal{N}_b}$, the dual of~\eqref{eq:ssvmprimal}~is~\cite{lacoste2013block}
\begin{align} \label{eq:dualssvm}
\minimize_{\bm{\beta}  \in \mathbb{R}^m \atop \bm{\beta}  \geq 0} ~~&~~ f(\bm{\beta}) := \frac{\lambda}{2} \|\A \bm{\beta} \|^2-\b^\top \bm{\beta} \\
\subjectto~~&~~ \mathbf{1}^\top \bm{\beta}_n = 1,\quad \forall n \in \mathcal{N} \nonumber
\end{align} 
where $m : = \sum_{n} |\mathcal{Y}_n|$, $\A \in \mathbb{R}^{d\times m}$ is formed with columns $\{ \frac{1}{\lambda N}\bm \psi_n(\tilde{\y}) \in \mathbb{R}^d |~\tilde{\y} \in \mathcal{Y}_n, n \in \mathcal{N} \} $, and vector $\b \in \mathbb{R}^m$ has entries~$ \{ \frac{1}{N} L_n (\tilde{\y}) \}_{\tilde{\y} \in \mathcal{Y}_n, n \in \mathcal{N}}  $.

A randomized single-block FW is adopted by~\cite{lacoste2013block}, to solve \eqref{eq:dualssvm}. Extending this approach to $B>1$ yields Algorithm~\ref{alg:dsvm}. To avoid storing the    high-dimensional vector $\bm \beta^t$, auxiliary variables 
$\tilde{\w}^t :=  \A \bm{\beta}^t,\quad t =0,1,\ldots$
are introduced. It can be shown that iterates $\{\tilde{\w}^t\}_{t=0,1\ldots}$ converge to the global minimizer of~\eqref{eq:ssvmprimal}; see \cite{lacoste2013block} for additional details.

\begin{algorithm}[t]
\caption{Structural SVMs solver }\label{alg:dsvm}
\begin{algorithmic}[1]
\renewcommand{\algorithmicrequire}{\textbf{Input:}}
\renewcommand{\algorithmicensure}{\textbf{Output:}}
\Require $\{(\z_n, \y_n)\}_{n = 1}^N$,  $\{\mathcal{Y}_n\}_{n=1}^N$, and $B$ 
\State Initialize  $ \bm{\beta}^0$,  $\hat{\ell}^0 =\ell_1^0= \ldots =\ell_{N_b}^0= 0 $, and $t=0$
\State Calculate $\tilde{\w}^0=\tilde{\w}_1^0= \ldots =\tilde{\w}_{N_b}^0 = \A \bm{\beta}^0$
\While{stopping\_criterion not met} 
\State Randomly pick $\mathcal{B}_t\subseteq \mathcal{N}$ such that $|\mathcal{B}_t|= B$
\For{$ n \in \mathcal{B}_t $ } 
\State Compute
 \begin{equation*}
  \mathbf{y}_n^*:= \arg\max_{\y \in \mathcal{Y}_n} ~ L_n(\y) - \langle \tilde{\w}^t, \bm{\psi}_n(\y) \rangle
 \end{equation*}
\State Update $\tilde{\w}_n^{t+1}=(1-\gamma_t)\tilde{\w}_n^t+\frac{\gamma_t}{\lambda N} \bm{\psi}_n (\y_n^*) $
\State Update $\ell_n^{t+1}=(1-\gamma_t)\ell_n^t+ \frac{\gamma_t}{N} L_n (\y_n^*)$
\State Update $\tilde{\w}^{t+1}= \tilde{\w}^{t}+ \tilde{\w}_n^{t+1} -\tilde{\w}_n^{t}$
\State Update $\ell^{t+1}= \ell^{t}+ \ell_n^{t+1} - \ell_n^{t}$
\EndFor
\State $t \leftarrow t+1$
\EndWhile
\end{algorithmic}
\end{algorithm}

\section{Simulated Tests}\label{sec:tests}
This section demonstrates the efficacy of the novel step sizes, and our parallel RB-FW solvers, in the context of the large-scale applications of Sec.~\ref{sec:app}. 

\subsection{Coordination of EV charging} \label{subsec:simua1}
In the first experiment, 63 EVs with maximum charging power $\bar p_n=3.45$~kW $\forall n$, were scheduled. 
 The simulation comprises $T=96$ time slots ranging from 12:00 pm to 12:00~pm of the next day. The values of 
 $\{\mathcal{T}_n\}_{n=1}^N$ and $\{R_n\}_{n=1}^N$ were set according to real travel data of the  National Household Travel Survey~\cite{Federalhighway, liang16scalable}. The base load $\{D_\tau\}_{\tau=1}^T$ were obtained
 by averaging the 2014 residential load data from Southern California Edison~\cite{SCE}. 

Convergence is assessed in terms of the relative error  $\epsilon(\p^t) := \left ({f(\p^t)-f(\p^*)} \right )/{f(\p^*)}$,
where $\p*$ is obtained using the off-the-shelf solver SeDuMi.

The following step-size sequences were compared.
\begin{align} \label{eq:test_stepsizes}
(\text{S1}):~~~\gamma_t &:= \frac{2}{\alpha t+2}  \\
(\text{S2}):~~~\gamma_{t} &:= \frac{\sqrt{\alpha^2 \gamma_{t-1}^4+4\gamma_{t-1}^2} - \alpha\gamma_{t-1}^2}{2},~\gamma_0 = 1 \nonumber\\
(\text{S3}):~~~ \gamma_t &:= \frac{2}{0.5\alpha t+2} \nonumber\\
(\text{S4}):~~~ \gamma_t &:= \frac{2}{0.5\alpha t^{0.9}+2} \nonumber\\
(\text{S5}):~~~ \gamma_t &:= \frac{2}{0.5\alpha t^{0.8}+2} .\nonumber
\end{align}
S2 is the sequence in~\eqref{eq:recursesto}, whereas S1 and S3-S5 are special cases of~\eqref{eq:slowstepsto}. Sequences S1-S5 cover a wide range of decay rates.  S2 vanishes faster than S1 [cf.~\eqref{eq:storecstepa}], whereas  the decay rates of S3-S5 are  smaller than that of S1. Note that S1 boils down to \textcolor{black}{the step size in~\eqref{eq:bcfwgamma}} when setting $B=1$.  
For all $n =1,\ldots, N$, $\p_n^0$ was initialized as 
\begin{equation*}
p_n^{0}(\tau)=\left\{
\begin{array}{ll}
\bar{p}_n(\tau),&\tau=1,\ldots,\bar{\tau}_n^0-1\\
R_n-\sum_{j=1}^{\bar{\tau}_n^0}\bar{p}_n(j),&\tau=\bar{\tau}_n^0\\
0,&\tau=\bar{\tau}_n^0+1,\ldots,T
\end{array}\right.
\end{equation*}
where  the index $\bar{\tau}_n^0 \geq 1$ was found as
\begin{equation*}
\sum_{\tau=1}^{\bar{\tau}_n^0-1}\bar{p}_n(\tau)\leq R_n~\text{and}~\sum_{\tau=1}^{\bar{\tau}_n^0}\bar{p}_n(\tau^0)> R_n.
\end{equation*}

The first experiment assumed that only one vehicle was randomly selected to update its charging profile per iteration. Algorithm~\ref{alg:docsev} with $B=1$ was run with the step sizes S1-S5 for 1,000 iterations. Fig.~\ref{fig:ev1b} depicts the evolution of 
$\epsilon(\p^t)$ across the iteration  index $t$ for Algorithm~\ref{alg:docsev} with  step sizes S1-S5 when $B=1$. It is observed that the algorithm converges towards a global minimum for all the tested step sizes.  In this scenario, the more slowly the step size diminishes, the faster the relative error decreases. Since $B=1$ and Algorithm~\ref{alg:docsev} is a special instance of Algorithm~\ref{alg:bcfw}, Fig.~\ref{fig:ev1b} therefore highlights how randomized single-block FW can benefit from the proposed step sizes. Specifically, the proposed step sizes S3-S5 lead to a much faster convergence than S1, which coincides with the step size in~\textcolor{black}{\eqref{eq:bcfwgamma}} since $B=1$. 

\begin{figure}[t]
   \centering
   \includegraphics[scale=0.6]{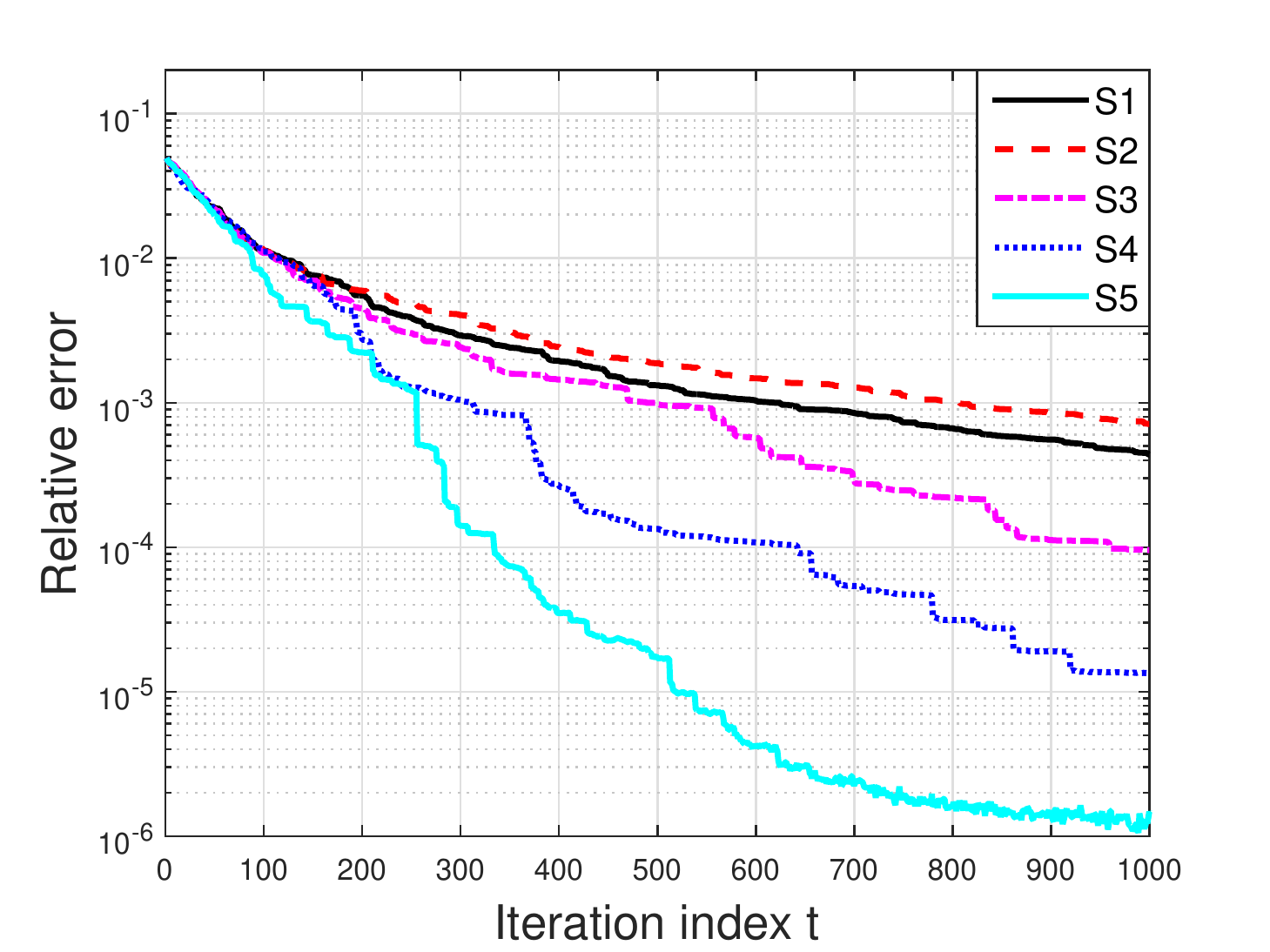}
   \caption{Convergence performance of Algorithm~\ref{alg:docsev} with $B=1$.}\label{fig:ev1b}
\end{figure} 

\begin{figure}[t]
	\centering
	\includegraphics[scale=0.6]{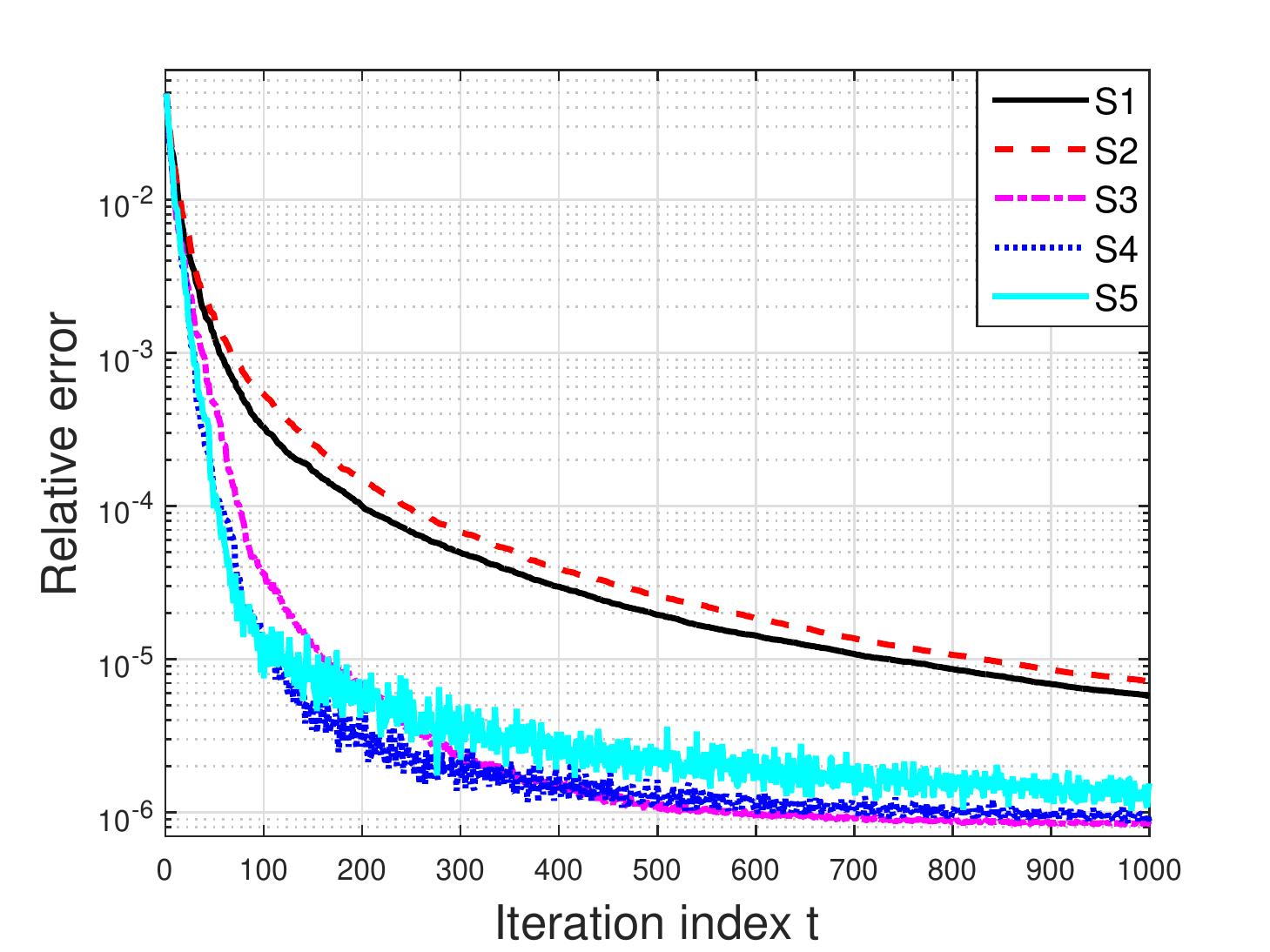}
	\caption{Convergence performance of Algorithm~\ref{alg:docsev} with $B=10$.}\label{fig:ev10b}
\end{figure} 

The second experiment tested Algorithm~\ref{alg:docsev} with $B=10$.  Fig.~\ref{fig:ev10b} further confirms that slowly diminishing step sizes lead to fast convergence in the first few iterations. However, as the iterates approach a minimum, the slowly diminishing step sizes yield larger oscillations; see e.g. S5 in Fig.~\ref{fig:ev10b}. This phenomenon has already been described in Remark~\ref{remark:1}. Comparing Figs.~\ref{fig:ev1b} and~\ref{fig:ev10b} reveals that considerably less iterations are required to achieve a target accuracy for larger~$B$. 
For example, about one fifth  of iterations are now required for Algorithm~\ref{alg:docsev} with S5 to reach $\epsilon(\p)\leq 10^{-5}$. Thus, if the ten blocks can be processed in parallel, setting $B=10$  roughly reduces the computation time by a factor of five, which further corroborates the merits of parallel~RB-FW. 

\begin{figure}[t]
	\centering
	\includegraphics[scale=0.6]{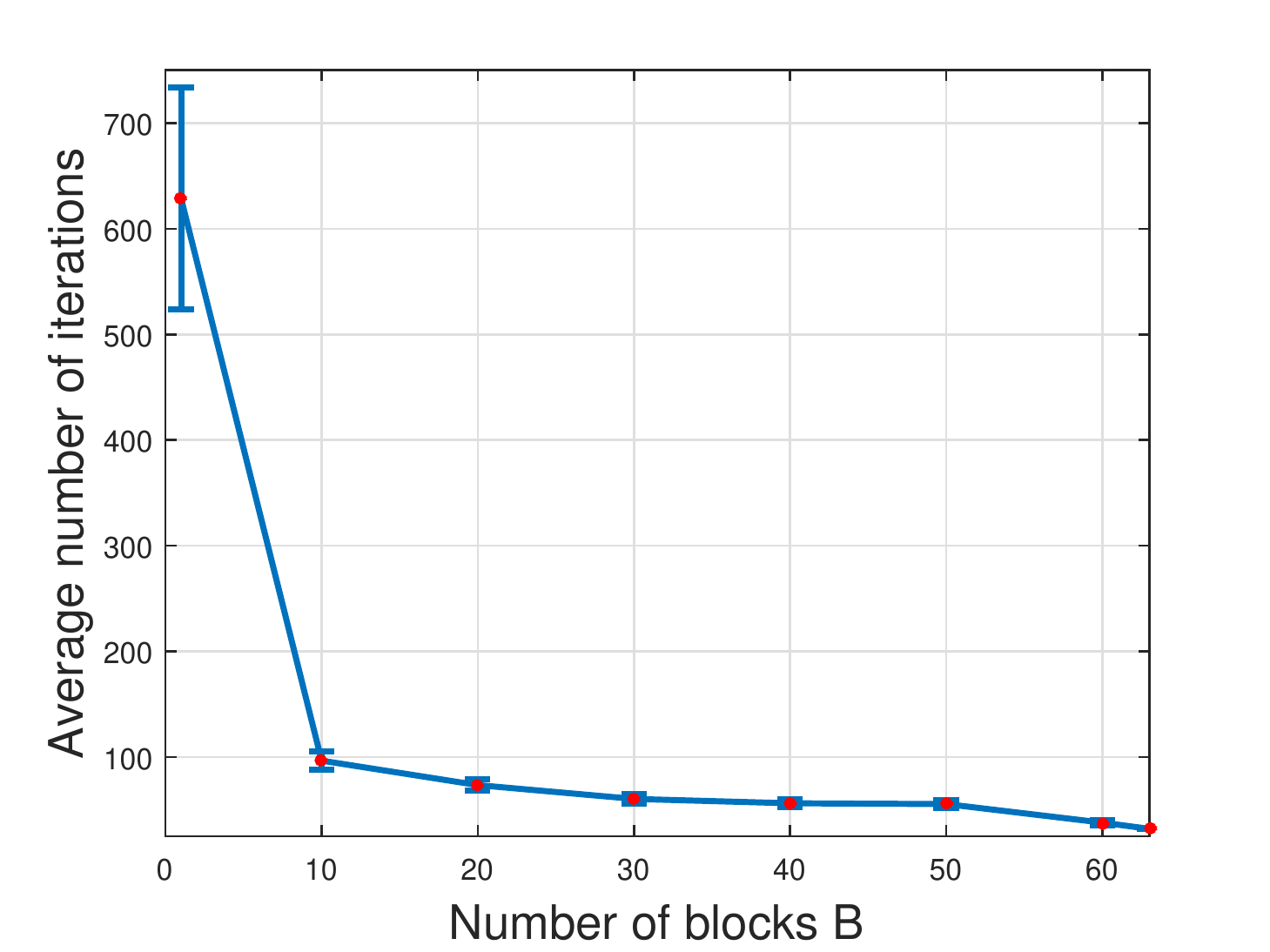}
	\caption{Number of iterations to achieve $\epsilon(\p^t)\leq 10^{-5}$.}\label{fig:blockvsIter}
\end{figure}

\begin{figure}[t]
	\centering
	\includegraphics[scale=0.6]{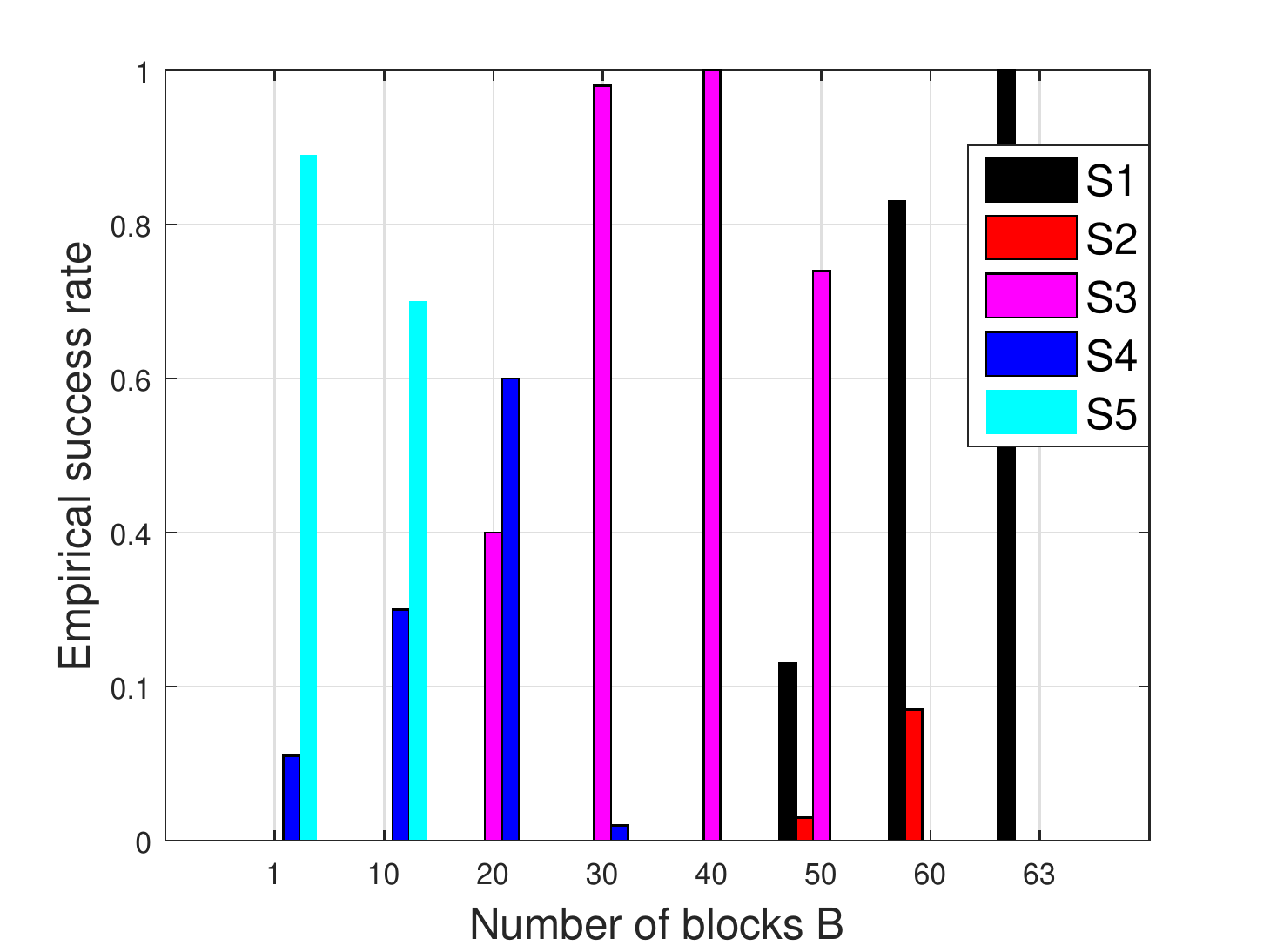}
	\caption{\textcolor{black}{Empirical success rate for  S1-S5 with different values of $B$.}}\label{fig:rate}
\end{figure}

The next experiment highlights the impact of $B$ on the convergence of Algorithm~\ref{alg:docsev}. Five copies of Algorithm~\ref{alg:docsev}, each one with a different step size S1-S5, are executed for 100 independent trials. The minimum value of $t$ such that at least one of these copies satisfies  $\epsilon(\p^t)\leq 10^{-5}$ is recorded. Fig.~\ref{fig:blockvsIter} represents the sample mean and standard derivation of this minimum $t$ averaged over the 100 trials for different values of~$B$. It is observed that both mean and standard derivation decrease for increasing $B$. If each iteration of Algorithm~\ref{alg:docsev} is run in $B$  cores in parallel, then the number of iterations constitutes a proxy for runtime.  Fig.~\ref{fig:blockvsIter} adopts this proxy to showcase the  benefit of adopting $B>1$. Nonetheless, observe that the influence of $B$ on the number of iterations decreases for large~$B$.  
Fig.~\ref{fig:rate} depicts the fraction of trials that each copy of Algorithm~\ref{alg:docsev} is the first among the five copies in achieving $\epsilon(\p^t)\leq 10^{-5}$. This figure reveals that slowly diminishing step sizes are preferable for small values~of~$B$.


\begin{figure}[t]
	\centering
	\includegraphics[scale=0.6]{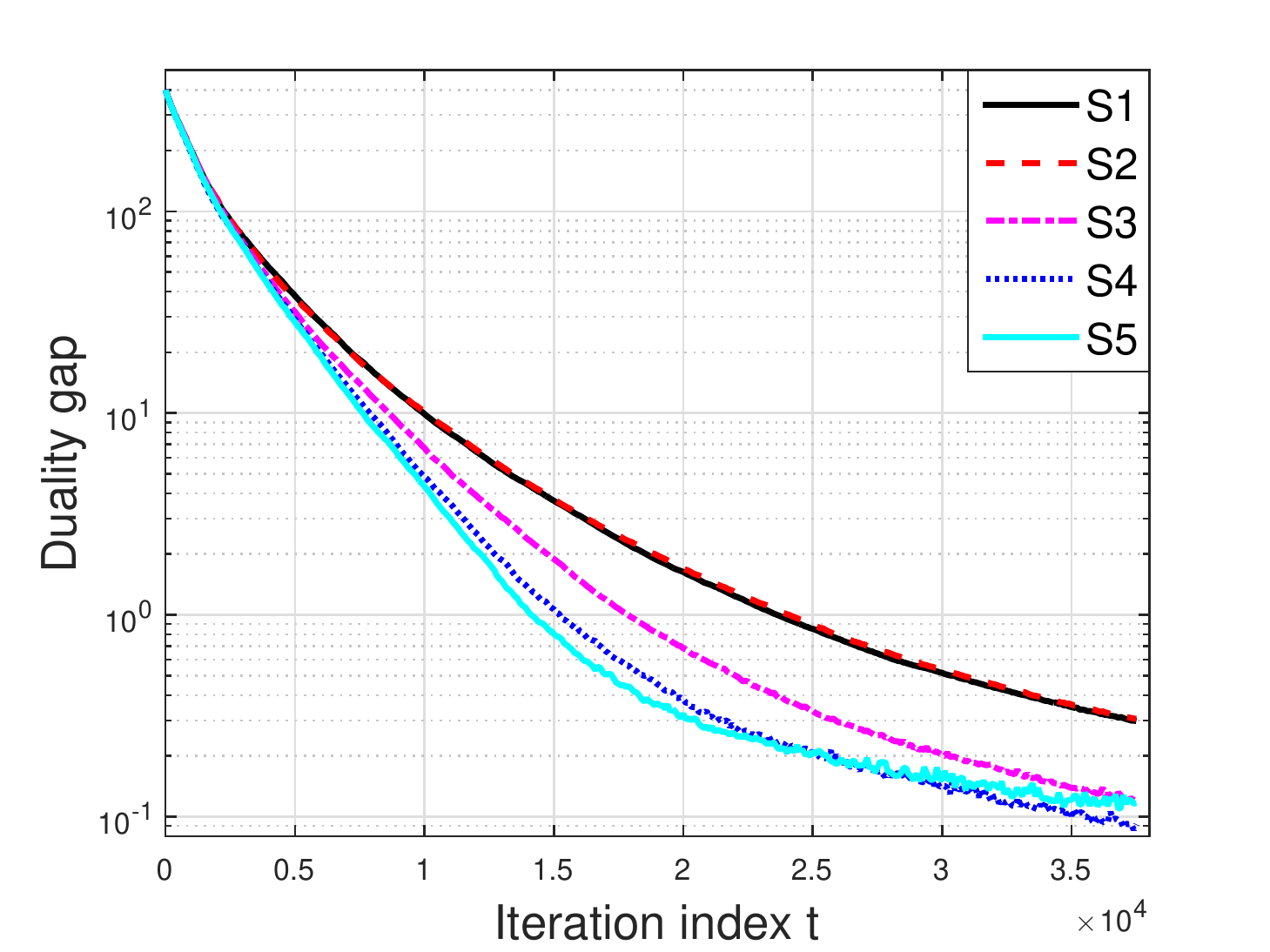}
	\caption{Progress of  $g(\bm{\beta}^t)$ for Algorithm~\ref{alg:dsvm} with $B=1$.}\label{fig:svm1b}
\end{figure}

\begin{figure}[t]
	\centering
	\includegraphics[scale=0.6]{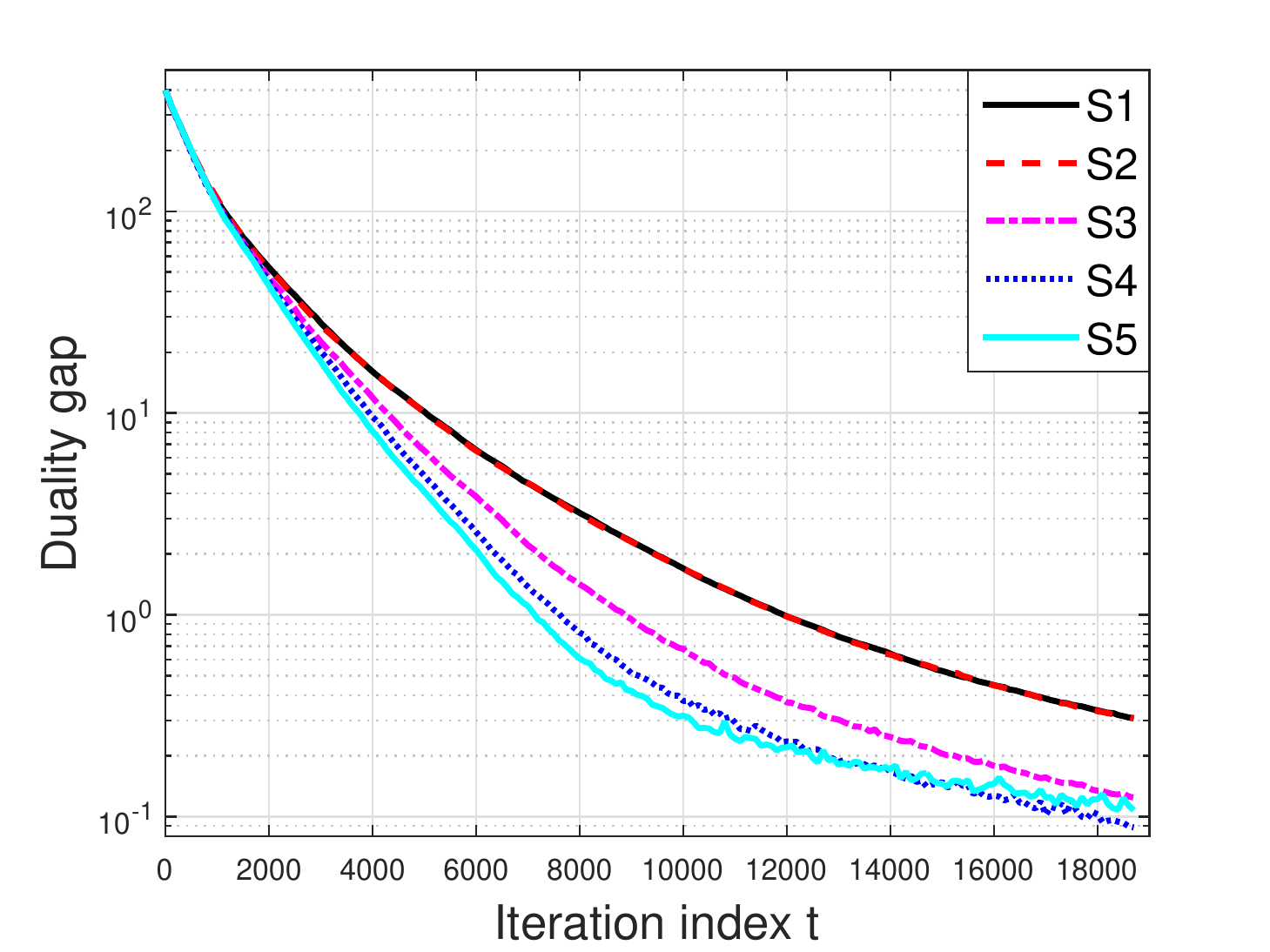}
	\caption{Progress of $g(\bm{\beta}^t)$ for Algorithm~\ref{alg:dsvm} with $B=2$.}\label{fig:svm2b}
\end{figure} 


\subsection{Structural SVMs}\label{subsec:simua1ncev}
The structural SVMs experiment was conducted on a subset of the OCR dataset~\cite{taskar2003max},~\cite{ocr_dataset}.   The  feature mapping  $\bm{\phi}(\z, \y)$, loss function $L_n( \tilde{\y})$, and solution to the subproblems in step 5 of Algorithm~\ref{alg:dsvm} were evaluated using the open source code~\cite{bcfwsolver} released by the authors in~\cite{lacoste2013block}. The dimension of   $\bm{\phi}(\z, \y)$ is $d = 4,028$, and the number of training examples is $N=6,251$.  To initialize each $\bm\beta_n^0$,  one of its entries chosen uniformly at random was set to one, whereas all the remaining  entries were set to zero.  Algorithm~\ref{alg:dsvm} with $\lambda=0.1$ and step sizes S1-S5 was run for six passes through all the training examples. The duality gap $g(\bm \beta^t)$ in~\eqref{eq:gxk0} is depicted in Figs.~\ref{fig:svm1b} and~\ref{fig:svm2b} for $B=1$ and $B=2$, respectively. In both cases,  Algorithm~\ref{alg:dsvm} with S5  outperforms all  other variants in the first few iterations. Furthermore, it can be seen that the required number of iterations to achieve a target accuracy almost halves when increasing $B$ from one~to~two.  

\section{Concluding Summary}\label{sec:conclusions}
The RB-FW algorithm is especially suited for solving high-dimensional constrained learning problems whose feasible set is block separable. For convex programs, the present contribution developed a rich family of feasibility-ensuring step sizes that enable parallel updates of provably convergent RB-FW iterates. The novel step sizes admit various decay rates, leading to flexible  convergence rates of RB-FW. Convergence of RB-FW is further established for constrained nonconvex problems too. Numerical tests using real-world datasets corroborated the speed-up advantage of parallel RB-FW  with the proposed step sizes over randomized single-block FW.  In addition, single-block FW with the developed slowly  diminishing step sizes converges markedly faster than that with existing step sizes. 


\appendix\label{sec:appendix}

\subsection{Proof of Lemma~\ref{lem:declem}} \label{app:A}
Using~\eqref{eq:setcurv} together with steps 4 and 5 of Algorithm~\ref{alg:bcfw}, we find  
	\begin{align*} 
	f(\x^{t+1}) &\leq  f(\x^{t}) +\sum_{n\in\mathcal{B}_t}\langle \x_n^{t+1} -\x_n^t, \nabla_{\x_n} f(\x^t)\rangle + {\gamma_t^2 C_f^{\mathcal{B}_t}}/{2} \nonumber\\
	&=f(\x^{t}) +\sum_{n\in\mathcal{B}_t} \gamma_t\langle \s_n^{t} -\x_n^t, \nabla_{\x_n} f(\x^t)\rangle + \gamma_t^2 C_f^{\mathcal{B}_t}/{2}.
	\end{align*}
	Subtracting $f(\x^*)$ from both sides yields
	\begin{equation*}\label{eq:dec5}
	h(\x^{t+1})\leq h(\x^{t}) +\sum_{n\in\mathcal{B}_t} \gamma_t\langle \s_n^{t} -\x_n^t, \nabla_{\x_n} f(\x^t)\rangle + {\gamma_t^2 C_f^{\mathcal{B}_t}}/{2}.
	\end{equation*}
	Taking conditional expectation with respect to $\mathcal{B}_t$,  we arrive for a given $\x^t$ at  
	\begin{align}
	&\mathbb E_{\mathcal{B}_t}\left[ h(\x^{t+1})| \x^t \right]  \nonumber \\ 
	&\leq h(\x^{t}) +\alpha \sum_{n\in\mathcal{N}_b} \gamma_t\langle \s_n^{t} -\x_n^t, \nabla_{\x_n} f(\x^t)\rangle + {\gamma_t^2 \bar C_f^{B}}/{2} \nonumber \\
	&=  h(\x^{t}) +\alpha \gamma_t \langle\s^t-\x^t , \nabla f(\x^t) \rangle+{\gamma_t^2 \bar C_f^{B}}/{2}  \nonumber \\
	&= h(\x^t) -\alpha \gamma_t g(\x^t)+{\gamma_t^2 \bar C_f^{B}}/{2}  \label{eq:predec} 
	\end{align}
	where the last equality follows from~\eqref{eq:gxk0} and step 4 of Algorithm~\ref{alg:bcfw}.
    Since $\x^t$ is determined by $\{\mathcal{B}_\tau\}_{\tau=0}^{t-1}$, taking expectations in~\eqref{eq:predec}
    with respect to $\{\mathcal{B}_\tau\}_{\tau=0}^{t-1}$ yields~\eqref{eq:dec}.

\subsection{Proof of Lemma~\ref{prop:slowstepsto}} \label{app:C}
	Plugging~\eqref{eq:slowstepsto} into the left-hand side of~\eqref{eq:qualconb} yields  
	\begin{align}\label{eq:slowstep}
	\frac{1-\alpha\gamma_{t+1}}{\gamma_{t+1} ^2 } & = \frac{\left[q(t+1)^{\rho} +2 -\alpha  \right]^2 -\alpha^2}{4} \nonumber \\
	& \leq \frac{\left[q(t+1)^{\rho} +2 -\alpha  \right]^2}{4} \nonumber \\
	& \leq \frac{\left[q(t+1)^{\rho}  -q +2  \right]^2 }{4} .
	\end{align}
	where the  last inequality follows from $q\leq \alpha \leq 1$.
	Consider the auxiliary function  
	$\varphi(x) := (x+c)^\rho - x^\rho -c,\quad x\geq 0$
    for some constant $c \geq 1$, and its first-order derivative  
	\begin{equation*}
	\varphi'(x)  = \rho (x+1)^{\rho -1} - \rho x^{\rho -1}, \quad x\geq 0.
	\end{equation*}
	Since $\rho \leq 1$, it holds that $\varphi'(x) \le 0$, and thus,
	\begin{equation*}
	\varphi(x)  \leq  \varphi(0) = c^\rho -c \leq 0, \quad \forall x\geq 0
	\end{equation*} 
	or, 
	\begin{equation} \label{eq:rhoinequal}
	(x+c)^\rho - c \leq x^\rho, \quad \forall x\geq 0 .
	\end{equation}
	Multiplying both sides of \eqref{eq:rhoinequal} by $q$, and  setting $c=1$ and $x=t$ gives rise to 
	\begin{equation} \label{eq:q}
	0 \leq q(t+1)^{\rho}  -q\leq q t^\rho, \quad \forall t\geq 0.
	\end{equation}
	Combining~\eqref{eq:slowstep} and \eqref{eq:q} yields
	\begin{equation} 
	\frac{1-\alpha\gamma_{t+1}}{\gamma_{t+1} ^2 }   \leq \frac{\left[qt^{\rho}  +2  \right]^2 }{4}  = \frac{1}{\gamma_{t} ^2} \nonumber 
	\end{equation}
	which concludes the proof.

\subsection{Proof of Corollary \ref{cor:slowstep}} \label{app:D}
    Expression \eqref{eq:stoprimalcon} follows directly by  substituting \eqref{eq:slowstepsto} into \eqref{eq:storesult}.	
	To show \eqref{eq:stodual}, apply Theorem~\ref{thm:stodualcon} to verify that  
	\begin{align}
	\alpha {g}_t & \leq \frac{\mathbb{E} \left[h(\x^K)\right]}{\gamma_0^2(t-K+1)\gamma_t}+ \frac{\bar{C}_f^{B}\gamma_K^2}{2\gamma_t} \nonumber \\
	& \leq \frac{(1-\alpha \gamma_0)\gamma_{K-1}^2 h(\x^0)}{\gamma_0^2 (t-K+1)\gamma_t} + \frac{K\bar{C}_f^{B} \gamma_{K-1}^2}{2(t-K+1)\gamma_t} +\frac{\bar{C}_f^{B}\gamma_K^2}{2\gamma_t} \nonumber \\
	& \leq     \frac{\left(1-\alpha\right) \gamma_{K-1}^2 h(\mathbf{x}^{0})}{(t-K+1)\gamma_t} + \frac{\gamma_{K-1}^2 \bar{C}_f^{B}(t+1)}{2\gamma_t(t-K+1)}   \nonumber \\
	& \leq \frac{\gamma_{K-1}^2}{t-K+1}\cdot \frac{(t+1)\bar{C}_f^{B} + 2(1-\alpha)h(\x^0)}{2\gamma_t}  \label{eq:tildeg}
	\end{align}
	for all $K\in \{1, \ldots, t\}$, where  the second inequality stems from~\eqref{eq:storesult} and the third one follows from~$\gamma_K \leq \gamma_{K-1}$ and $\gamma_0 =1$.
	The next step is to bound the first quotient in the right-hand side of \eqref{eq:tildeg}. To this end, set $c=2/q$ and $x= K-1$ in~\eqref{eq:rhoinequal} to deduce that 
	\begin{equation} 
	\gamma_{K-1} = \frac{2}{q (K-1)^\rho +2} \leq \frac{2}{q (K-1+2/q)^\rho } \label{eq:slowstepgammaK}.
	\end{equation}
    Now set  $K =  \lceil \mu (t+2/q) \rceil$, where $\mu$ is an arbitrary constant. Since $K \in \{1,\ldots,t\}$, $\mu$ needs to satisfy 
    \begin{equation}\label{eq:mu}
    0< \mu \leq \frac{t}{t+2/q}.
    \end{equation}
     Since 
	\begin{equation*}
	\lceil \mu (t+2/q) \rceil -1 +2/q \geq \mu (t+2/q) -2 +2/q \geq \mu (t+2/q) > 0
	\end{equation*}
	it follows from~\eqref{eq:slowstepgammaK} that
	\begin{equation*} 
	\gamma_{K-1} \leq \frac{2}{q \mu^\rho (t+2/q)^{\rho}} .
	\end{equation*}
	Therefore,
	\begin{align} 
	\frac{\gamma_{K-1}^2}{t-K+1} \leq &  \frac{4} {q^2(t-K+1) \mu^{2\rho} (t+2/q)^{2\rho}} \nonumber \\   
	\leq &\frac{4} {q^2[t-\mu(t+2/q)]\mu^{2\rho} (t+2/q)^{2\rho}} \label{eq:deno} .
	\end{align}
	Minimizing the right-hand side with respect to $\mu$ in the interval~\eqref{eq:mu} yields
	\begin{equation}\label{eq:K}
	\frac{\gamma_{K-1}^2}{t-K+1} \leq \frac{4(2\rho+1)^{2\rho+1}}{q^2(2\rho)^{2\rho}t^{2\rho+1}} 
    \end{equation}
    for $\mu = \frac{2\rho}{2\rho+1}\frac{t}{t+2/q}$.
	
	From~\eqref{eq:tildeg},  ${g}_t$ can be upper bounded as
	\begin{align*}
	{g}_t \leq &  \frac{\gamma_{K-1}^2}{t-K+1} \cdot\frac{(t+1)\bar{C}_f^{B} + 2(1-\alpha)h(\x^0)}{2\alpha \gamma_t}   \nonumber \\   
	\leq &  \frac{(2\rho+1)^{2\rho+1}(qt^\rho +2) }{\alpha q^2(2\rho)^{2\rho}} \cdot\frac{(t+1)\bar{C}_f^{B} + 2(1-\alpha)h(\x^0)}{t^{2\rho+1}}.
	\end{align*}
    where the second inequality follows from~\eqref{eq:K} and \eqref{eq:slowstepsto}.

\subsection{Proof of Lemma~\ref{prop:faststepsto}} \label{app:E}

	To prove \eqref{eq:storecstepa} by induction, it clearly holds for $t=0$, and assume that it holds also for a fixed $t\geq 0$. Then, one needs to show that
	\begin{equation} \label{eq:gammat}
	\frac{1}{\alpha t+ 1+ \alpha} \leq \gamma_{t+1} \leq \frac{2}{\alpha t+2 + \alpha} .
	\end{equation} 
	To this end, define the auxiliary function
	\begin{equation*}
	\hat  \varphi(x) :=\frac{ \sqrt{\alpha^2 x^4+4x^2} -\alpha x^2 }{2},\quad x \geq 0
	\end{equation*}
	which is monotonically increasing since  
	\textcolor{black}{
	\begin{equation*}
	\hat  \varphi'(x) =\frac{(\alpha^2 x^2+2)- \alpha x\sqrt{\alpha^2x^2+4}}{\sqrt{\alpha^2 x^2+4}}> 0.
	\end{equation*}}
    Thus, by the induction hypothesis we have 
	\begin{equation} \label{eq:gamma}
	\hat  \varphi\left(\frac{1}{\alpha t+1} \right) \leq \gamma_{t+1} = \hat  \varphi(\gamma_{t} ) \leq \hat  \varphi\left(\frac{2}{\alpha t+2}\right).
	\end{equation}
	Note that
	\begin{equation*}
    1-\frac{2\alpha}{\alpha t +2 + \alpha }\leq \left(1-\frac{\alpha}{\alpha t +2 + \alpha }\right)^2= \left(\frac{\alpha t +2}{\alpha t +2 + \alpha }\right)^2
    \end{equation*}    or, equivalently,
	\begin{align} 
	1 &\leq \frac{2\alpha}{\alpha t +2 + \alpha } + \left(\frac{\alpha t +2}{\alpha t +2 + \alpha }\right)^2 \nonumber\\
	& =  \left(\frac{\alpha}{\alpha t+2} + \frac{\alpha t +2}{\alpha t +2 + \alpha}  \right)^2- \frac{\alpha^2}{(\alpha t+2)^2}  \label{eq:ineqrecstep}.
	\end{align}
	This inequality implies that
	\begin{align}
	\hat  \varphi\left(\frac{2}{\alpha t+2}\right) &= \frac{1}{\alpha t +2} \sqrt{\frac{4\alpha^2}{(\alpha t + 2)^2} + 4} - \frac{2\alpha }{(\alpha t +2 )^2}\nonumber  \\
	&\leq  \frac{2}{\alpha t +2 + \alpha}. \label{eq:d1}
	\end{align} 
	Combining \eqref{eq:d1} with the second inequality in \eqref{eq:gamma} proves the second inequality in~\eqref{eq:gammat}. 
	
	On the other hand, since $
	(\alpha t+1+\alpha )^2 \geq\alpha( \alpha t+1+\alpha)  + (\alpha t+1)^2$,
	it holds that
	\begin{align}
	\frac{\alpha^2}{(\alpha t+1)^2}+4 &\geq \frac{\alpha^2}{(\alpha t+1)^2} + \frac{4\alpha}{\alpha t+1+ \alpha } +\frac{4(\alpha  t+1)^2}{(\alpha t+1 + \alpha )^2}\nonumber \\
	& = \left(\frac{\alpha}{\alpha  t+1}+ \frac{2(\alpha t+1)}{\alpha t+1+\alpha } \right)^2.
	\end{align}
	Thus,
	\begin{align}
	\hat  \varphi\left(\frac{1}{\alpha t+1}\right) &= \frac{1}{2(\alpha t+1)}\sqrt{\frac{\alpha^2}{(\alpha  t+1)^2}+4}- \frac{\alpha}{2( \alpha  t+1)^2} \nonumber \\
	&\geq \frac{1}{\alpha t+1 + \alpha }. \label{eq:d2}
	\end{align}
	Combining \eqref{eq:d2} with the first inequality in \eqref{eq:gamma} proves the first inequality in~\eqref{eq:gammat}, thus concluding the proof of~\eqref{eq:storecstepa}.
	
	To prove~\eqref{eq:storecstepb}, one can also proceed by induction. First, $\gamma_1 \leq \gamma_0$ since $\frac{\sqrt{\alpha^2+4}-\alpha}{2}\leq 1$. Assuming $\gamma_{t-1} \leq \gamma_t$, it follows that $\gamma_t\leq \gamma_{t+1}$ since $\hat{\varphi}(x)$ is nondecreasing.

\subsection{Proof of Corollary \ref{cor:recf}} \label{app:F}

	Inequality \eqref{eq:fastfcon} readily follows from \eqref{eq:storesult} and \eqref{eq:storecstepa}. To prove \eqref{eq:fastgcon},  note that \eqref{eq:tildeg} holds because of $\gamma_0 =1 $ and \eqref{eq:storecstep}.  Meanwhile, by the second inequality in~\eqref{eq:storecstepa},~the step size in~\eqref{eq:recursesto} satisfies \eqref{eq:K} for $q=\alpha$ and $\rho=1$, that is 
	\begin{equation} \label{eq:recK}
	\frac{\gamma_{K-1}^2}{t-K+1} \leq \frac{27}{\alpha^2 t^3}.
	\end{equation}
	Plugging~\eqref{eq:storecstepa} together with \eqref{eq:recK} into  \eqref{eq:tildeg}, yields  \eqref{eq:fastgcon}.


\bibliographystyle{IEEEtran}
\bibliography{myabrv,power}

\end{document}